\numberwithin{figure}{section}
\newtheorem{lemma}{Lemma}[section]
\begin{document}

\title{Numerical solutions for a class of singular boundary value problems arising in the theory of epitaxial growth }
\author{Amit Kumar Verma$^a$, Biswajit Pandit$^b$\thanks{Email:$^a$akverma@iitp.ac.in,$^b$biswajitpandit82@gmail.com}
\\\small{\textit{$^{a,b}$Department of Mathematics, Indian Institute of Technology Patna, Patna--$801106$, Bihar, India.}}\\\\
Carlos Escudero$^c$\thanks{Email:$^{c}$cel@icmat.es}
\\\small{\textit{$^{c}$Departamento de Matem{\'a}ticas $\&$ ICMAT (CSIC-UAM-UC3M-UCM),}}\\
\small\textit{Universidad Aut{\'o}noma de Madrid, E-28049 Madrid, Spain.}
}

\date{\today}

\maketitle

\begin{abstract}
The existence of numerical solutions to a fourth order singular boundary value problem arising in the theory of epitaxial growth is studied. An iterative numerical method is applied on a second order nonlinear singular boundary value problem which is the exact result of the reduction of this fourth order singular  boundary value problem. It turns out that the existence or nonexistence of numerical solutions fully depends on the value of a parameter. We show that numerical solutions exist for small positive values of this parameter. For large positive values of the parameter, we find nonexistence of solutions. We also observe existence of solutions for negative values of the parameter and determine the range of parameter values which separates existence and nonexistence of solutions.
This parameter has a clear physical meaning as it describes the rate at which new material is deposited onto the system. This fact allows us to interpret
the physical significance of our results.
\end{abstract}
\textit{Keywords:} \small{Singular boundary value problems, epitaxial growth, non-self-adjoint operator, iterative numerical approximations.}\\

\section{Introduction}

 Along the years, a revolution of semiconductor device design has been spawned by the invention of super-lattices and similar structures. These  structures  led to an improvement in the performances of many electronic instruments like lasers, diodes and bipolar transistors. Such advanced structures can be produced by means of epitaxial growth techniques. Epitaxial growth techniques that produce thin films under high vacuum conditions (\cite{Barabasi:1995}) have largely superseded older technologies in the semiconductor industry. Due to this success of epitaxial growth techniques, several mathematical models have been introduced in order
 to  understand them better. In general terms, one could say that these  models belong to one of two different classes: they are either discrete probabilistic models, such as cellular automata, or differential equations (\cite{Barabasi:1995}). In this work we strictly focus on the second type of model, and in particular we restrict our attention to the differential equation described in (\cite{Carlos2008,Carlos2012,CarlosOrigin2012,CarlosRadial2013}). In these references the mathematical description of epitaxial growth is carried out by means of a function,
\begin{equation}
\begin{aligned}\label{Eq 1}
&\sigma : \Omega \subset \mathbb{R}^{2}\times \mathbb{R}^{+} \rightarrow \mathbb{R},
\end{aligned}
\end{equation}
which describes the height of the growing interface in the spatial point $x \in \Omega \subset \mathbb{R}^{2} $ at time $t \in \mathbb{R}^{+}$. This function obeys the fourth order partial differential equation (\cite{Carlos2008})
\begin{equation}
\partial_t \sigma +\Delta^{2} \sigma~=~\text{det}(D^{2}\sigma)+\lambda \Gamma(x,t),~~~x\in \Omega \subset \mathbb{R}^{2},
\end{equation}
where $\Gamma(x,t)$ models the incoming mass entering the system through epitaxial deposition and $\lambda$ measures the intensity of this flux.
Roughly speaking, the thin film grows by the introduction of new mass into the system (modeled by $\lambda \Gamma(x,t)$) and its structure is
characterized by means of its height $\sigma(x,t)$. A basic modeling assumption is that this structure is dominated by processes taking place at the
surface of the thin film which are coarse-grained modeled by the biharmonic operator and the nonlinearity.
For simplicity we will focus on the stationary counterpart of this partial differential equation:
\begin{equation}\label{Eq 2}
\Delta^{2} \sigma~=~\text{det}(D^{2}\sigma)+\lambda G(x),~~~x\in \Omega \subset \mathbb{R}^{2},
\end{equation}
where we have assumed that $\Gamma(x,t) \equiv G(x)$ is a stationary flux, and we set this problem on the unit disk again for simplicity. As previously we will consider two types of boundary conditions, namely homogeneous Dirichlet and homogeneous Navier boundary conditions (\cite{Carlos2012}). By using the transformation  $r=|x|$ and $ \sigma(x)=\phi(|x|)$  the above partial differential equation (\ref{Eq 2}) is converted  into a fourth order ordinary differential equation which reads
\begin{equation}
  \begin{aligned}\label{Eq 3}
&\frac{1}{r} \left \{ r\left[\frac{1}{r}(r \phi')^{'}\right]'\right \}^{'}=\frac{1}{r} \phi^{'} \phi^{''}+\lambda G(r)
 \end{aligned}
\end{equation}
where $\displaystyle '=\frac{d}{dr}$.

The Dirichlet boundary conditions that correspond to (\ref{Eq 3}) are 
\begin{eqnarray}
\phi'(0)=0,~\phi(1)=0,~\phi'(1)=0,~\displaystyle \lim_{r\rightarrow0} r\phi'''(r)=0,
\end{eqnarray}
the Navier boundary conditions of type one are
\begin{eqnarray}
\phi'(0)=0,~\phi(1)=0,~\phi'(1)+\phi''(1)=0,~\displaystyle \lim_{r\rightarrow0} r\phi'''(r)=0,
\end{eqnarray}
and the Navier boundary conditions of type two are
\begin{eqnarray}
\phi'(0)=0,~\phi(1)=0,~\phi''(1)=0,~\displaystyle \lim_{r\rightarrow0} r\phi'''(r)=0.
\end{eqnarray}
The condition $\phi'(0)=0$  imposes the existence of an extremum at the origin. The conditions  $\phi(1)=0$ and $\phi'(1)=0$ are the actual boundary conditions. For simplicity we take  $G(r)=1$, which physically means that the new material is being deposited uniformly on the unit disc. Using $\displaystyle \lim_{r\rightarrow0} r\phi'''(r)=0$ and integrating by parts,  equation (\ref{Eq 3}) gives
\begin{equation}
\begin{aligned}\label{Eq 4}
 & r\left[\frac{1}{r}(r \phi')^{'}\right]'=\frac{1}{2}(\phi^{'})^{2}+\frac{1}{2}\lambda  r^{2}.
 \end{aligned}
\end{equation}
Using the second transformation $ w=r \phi'$, from equation (\ref{Eq 4}) we get
\begin{equation}
\begin{aligned}\label{Eq 5}
&&&&r^{2} w''(r)-r w'(r)=\frac{1}{2} w^{2}(r)+\frac{1}{2} \lambda r^{4},
\end{aligned}
\end{equation}
Corresponding to homogeneous Dirichlet boundary condition is 
\begin{equation}
w'(0)=0 ~\mbox{and}~ w(1)=0,
\end{equation} 
homogeneous Navier boundary condition of type one is 
\begin{equation}
w'(0)=0~ \mbox{and} ~w'(1)=0,
\end{equation} 
and homogeneous Navier boundary condition of type two is
\begin{equation}
w'(0)=0 ~\mbox{and}~ w(1)=w'(1).
\end{equation} 
 Equation (\ref{Eq 5}) was numerically integrated by means of the use of a fourth order Runge-Kutta method in \cite{CarlosRadial2013}. Now, we understand the solution of equation (\ref{Eq 5}) belonging to the space $C^{2}\left[0,1\right]$.\nocite{PandeySBVP2008}
 
 Equation (\ref{Eq 5}) is a nonlinear, non self-adjoint and singular differential equation. Further more it has multiple solutions. Therefore discrete methods such as finite element method etc may not be applicable  to pick all solutions together. These facts highlight the difficulties to deal with such an equation both analytically and numerically. Some recent theoretical progress has been built nevertheless regarding this type of boundary value problems as well some generalizations~\cite{BE2016,EGHPT2015,EGP2015,EP2013,ET2015,ET2017}.
The reader may appreciate in these references the wide range of mathematical techniques needed to analyze this sort of differential equations.

The aim of the present work is to find the numerically approximated solutions of the fourth order differential equation (\ref{Eq 3}) with $G(r) \equiv 1$. To get the solutions of (\ref{Eq 3}) we first compute the solutions of differential equation (\ref{Eq 5}) using an iterative numerical scheme and compare our results to the ones in \cite{CarlosRadial2013}. This numerical method became popularized in recent years under the name of variational iteration method (VIM) but it is in
fact a reformulation of classical schemes (see \cite{Anderson1980, Ramos2008, HE1999, HE2007, HEXHWU2007, VVA1981, ASKA2010, TES1933, LAAS2010, SAAS2012}). Recently,   VIM are still under investigation, e.g., Wazwaz et al. (\cite{wazwaz2011}) used it to find the approximate solution of nonlinear singular boundary value problem, Zhang et al. (\cite{XZFYAM2018}) applied it on a family of fifth-order convergent methods for solving nonlinear equations, Zellal et al. (\cite{MKB2018}) used it on biological population model, Singh et al. (\cite{MARP2019}) discussed  it on a 2 point and 3 point nonlinear SBVPs.

The remainder of the paper has been organized as follows. In section~\ref{nm}, we describe the numerical method, show that it is well suited to approach the
present boundary value problems, and illustrate how it works by solving explicitly a linearization of our nonlinear differential equation. In section~\ref{numerics}, we use this method to solve numerically the non self-adjoint nonlinear singular boundary value problems under study and show a wide range of numerical results. In section \ref{P1Tables} and \ref{P1Figures}, we place the numerical data.
Finally, in section~\ref{conclusions} we draw our main conclusions.

\section{The numerical method}\label{nm}

In order to explain the  phenomenology of this method we consider a general non-linear differential equation of the form
\begin{equation}
 \begin{aligned}\label{Eq 6}
& Lw(r)+Nw(r)=f(r),&
 \end{aligned}
\end{equation}
where $L$ is the linear operator, $N$ is the nonlinear operator and $f(r)$  is a known function. We can construct the  correction functional  to our iterative
scheme applied to equation (\ref{Eq 6})   as follows:
\begin{equation}
\begin{aligned}\label{Eq 7}
&  \displaystyle w_{n+1}(r)=w_{n}(r)+\int_{0}^{r} \mu (t)\left(Lw_{n}(t)+N\tilde{w}_{n}(t)-f(t)\right) dt,
 \end{aligned}
\end{equation}
where $\mu(t) $  is the Lagrange   multiplier and $\tilde{w}_{n}$ is of restricted variation i.e., $\delta \tilde{w}_{n}=0$ (\cite{AKV2015}). The Lagrange multiplier $\mu(t) $ can be identified optimally via the variational principle  (\cite{Variationalprinciple}) and integration by parts. After getting the value of $\mu(t) $, we arrive at a recurrence relation defined by equation (\ref{Eq 7}). We take a suitable initial approximation $w_{0}(r)$ in such a way such that this integral in equation (\ref{Eq 7}) is convergent and hence we can compute $w_{1}(r)$, $w_{2}(r),\cdots$. The exact solution $w(r) $ of equation (\ref{Eq 6}) can be found as
\begin{equation}
\begin{aligned}\label{Eq 8}
&w(r)=\lim_{n\rightarrow \infty} w_{n}(r).
\end{aligned}
\end{equation}
Before we solve equation (\ref{Eq 5}) using (\ref{Eq 7}) we shall discuss the properties of  this method for the nonlinear singular boundary value problems associated to (\ref{Eq 5}). From equation (\ref{Eq 7}) and equation (\ref{Eq 5}) we get the correction functional as
\begin{equation}
\begin{aligned}\label{Eq 9}
&\displaystyle w_{n+1}(r)=w_{n}(r)+\int_{0}^{r} \mu(t)\left(t^{2}w_{n}''(t)-t \tilde{w}_{n}'(t)-\frac{1}{2}\tilde{w}_{n}^{2}(t)-\frac{1}{2}\lambda t^{4}\right) dt.
\end{aligned}
\end{equation}
To find the value of $\mu(t)$ we take variations on both sides of (\ref{Eq 9}) such that $\delta \left(t \tilde{w}_{n}'(t)+\frac{1}{2}\tilde{w}_{n}^{2}(t)\right)=0$. Eq (\ref{Eq 9}) becomes
\begin{equation}
\begin{aligned}\label{Eq 10}
\displaystyle \delta w_{n+1}(r)=\delta w_{n}(r)+\delta \int_{0}^{r} \mu(t)\left(t^{2}w_{n}''(t)-t \tilde{w}_{n}'(t)-\frac{1}{2}\tilde{w}_{n}^{2}(t)-\frac{1}{2}\lambda t^{4}\right) dt,
\end{aligned}
\end{equation}
or equivalently,
\begin{equation}
\begin{aligned}\label{Eq 11}
&\displaystyle \delta w_{n+1}(r)=\delta w_{n}(r)+\delta \int_{0}^{r} \mu(t)t^{2}w_{n}''(t) dt.
\end{aligned}
\end{equation}
By using integration by parts we get
\begin{equation}
\begin{aligned}\label{Eq 12}
\displaystyle &\delta w_{n+1}(r)=\left( 1-\mu'(r) r^{2}-2 r \mu(r) \right) \delta w_{n}(r)+\mu(r) r^{2}\delta w_{n}'(r)+\int_{0}^{r} \left(\mu''(t) t^{2}+4 t \mu'(t)+2 \mu(t) \right) \delta w_{n} dt.
 \end{aligned}
\end{equation}
The extremum condition imposed on $ w_{n+1}$ gives $\delta w_{n+1}=0$ (\cite{AKV2015}), and from (\ref{Eq 12}) we arrive at the following stationary conditions
\begin{equation}
\begin{aligned}\label{Eq 13}
\hspace{-.4cm}
&1-\mu'(r) r^{2}-2 r \mu(r)=0,
\end{aligned}
\end{equation}
\begin{equation}
\begin{aligned}\label{Eq 14}
\hspace{-2.3cm}
&\mu(r)=0,
\end{aligned}
\end{equation}
\begin{equation}
\begin{aligned}\label{Eq 15}
& \mu''(t) t^{2}+4 t \mu'(t)+2 \mu(t)=0.
\end{aligned}
\end{equation}
By solving (\ref{Eq 13}-\ref{Eq 15}), we get the optimal value of $\mu(t)$, given by
\begin{equation}
\begin{aligned}\label{Eq 16}
\mu(t)=\frac{t-r}{t^{2}}.
\end{aligned}
\end{equation}
Hence the correction functional (\ref{Eq 9}) becomes
\begin{equation}
\begin{aligned}\label{Eq 17}
&\displaystyle w_{n+1}(r)=w_{n}(r)+\int_{0}^{r} \frac{t-r}{t^{2}}\left(t^{2}w_{n}''(t)-t {w}_{n}'(t)-\frac{1}{2}{w}_{n}^{2}(t)-\frac{1}{2}\lambda t^{4}\right) dt.
\end{aligned}
\end{equation}
Equation (\ref{Eq 17}) gives rise to a sequence \{$w_{n}(r)$\}. If this sequence \{$w_{n}(r)$\} is convergent then its limit gives the exact solution of the nonlinear singular differential equation (\ref{Eq 5}). Now we shall prove that this sequence \{$w_{n}(r)$\} is well defined.
This fact is established by means of the following Lemma.

\begin{lemma}\label{corollary1}
Let $w_{n}(r),~n=0,1,2,\cdots$,  satisfy the following properties:
\begin{itemize}
\item $P_{1}: ~w_{n}'(0)=0,$
\end{itemize}
\begin{itemize}
\item $\displaystyle P_{2}:~ \lim_{r\rightarrow 0^{+}}w_{n}(r)=0,$
\end{itemize}
\begin{itemize}
\item $P_{3}:~ w_{n}''(r) \in C^2([0,1]).$
\end{itemize}
Then $\displaystyle \frac{w_{n}(r)}{r^{2}}$ and
$\displaystyle \frac{w_{n}'(r)}{r}$ are bounded on $[0,1]$ for all $n\in \mathbb{N}_0$.
\end{lemma}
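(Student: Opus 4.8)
The plan is to exploit the vanishing of $w_n$ and $w_n'$ at the origin, which is precisely what $P_1$ and $P_2$ provide, and then to resolve the resulting $0/0$ indeterminacy using the regularity granted by $P_3$. Away from the origin the claim is immediate: on any interval $[\epsilon,1]$ with $\epsilon>0$ the denominators $r$ and $r^2$ are bounded below by $\epsilon$ and $\epsilon^2$, while $w_n$ and $w_n'$ are continuous and hence bounded there, so both quotients are automatically bounded. Thus the only genuine issue is the behaviour as $r\to 0^+$.

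First I would record the boundary values at the origin: from $P_2$ we have $w_n(0)=0$, and from $P_1$ we have $w_n'(0)=0$. Since $P_3$ guarantees $w_n''\in C^2([0,1])$, the function $w_n$ is four times continuously differentiable, so $w_n'$ is differentiable and $w_n''$ is continuous on the whole compact interval $[0,1]$; in particular $w_n''(0)$ is a well-defined finite number.

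Next, for $w_n'(r)/r$ I would apply L'H\^opital's rule (or, equivalently, Taylor's theorem with integral remainder). Since $w_n'(r)\to 0$ and $r\to 0$, we obtain
\begin{equation*}
\lim_{r\to 0^+}\frac{w_n'(r)}{r}=\lim_{r\to 0^+}\frac{w_n''(r)}{1}=w_n''(0).
\end{equation*}
For $w_n(r)/r^2$, both numerator and denominator tend to $0$, so one application of L'H\^opital reduces the quotient to $w_n'(r)/(2r)$ and the previous computation yields
\begin{equation*}
\lim_{r\to 0^+}\frac{w_n(r)}{r^2}=\frac{1}{2}\,w_n''(0).
\end{equation*}
Both limits are finite, so each quotient extends to a continuous function on all of $[0,1]$ once we assign it these limiting values at $r=0$; being continuous on a compact interval, it is bounded. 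Since this argument uses only the structural hypotheses $P_1$, $P_2$, $P_3$ and nothing specific about the index, it applies verbatim to each $n\in\mathbb{N}_0$.

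I do not anticipate a serious obstacle, since the content of the lemma is entirely local at the origin and the hypotheses are tailored so that the numerator vanishes to exactly the order needed to cancel the singular denominator. The one point deserving care is justifying that the limits actually exist before invoking boundedness: this is where the smoothness in $P_3$, as opposed to mere continuity of $w_n$, is genuinely used, as it guarantees that $w_n''(0)$ is finite and licenses the repeated use of L'H\^opital.
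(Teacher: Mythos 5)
Your proof is correct and follows essentially the same route as the paper: a double application of L'H\^opital's rule reducing $\frac{w_{n}(r)}{r^{2}}$ to $\frac{w_{n}'(r)}{2r}$ and then to $\frac{w_{n}''(r)}{2}$, with the regularity of $w_{n}''$ at the origin supplying a finite limit. The paper's own proof is simply a terser version of this argument; your explicit treatment of the region away from the origin and of the continuous extension at $r=0$ just fills in details the paper leaves implicit.
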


\begin{proof} We have $\displaystyle \lim_{r \rightarrow 0^{+}} \frac{w_{0}(r)}{r^{2}}=\displaystyle \lim_{r \rightarrow 0^{+}} \frac{w_{0}'(r)}{2r}=\displaystyle \lim_{r \rightarrow 0^{+}} \frac{w_{0}''(r)}{2} $. But $w_{0}''(r)$ is bounded on $[0,1]$. So we conclude that $\displaystyle \frac{w_{0}(r)}{r^{2}}$ is bounded on $[0,1]$. By a similar argument, since $P_{1}$, $P_{2}$, and $P_{3}$ hold true for all $n\in \mathbb{N}$, $\displaystyle \frac{w_{n}(r)}{r^{2}}$ is bounded on $[0,1]$ for all $n\in \mathbb{N}$. The boundedness of $\displaystyle \frac{w_{n}'(r)}{r}$ follows from
the same argument.
\end{proof}

\begin{lemma}\label{Lemma1}
Let $w_{0}(r)$ be the initial approximation of (\ref{Eq 17}) such that
\begin{equation}\label{Eq 44}
 w_{0}'(0)=0, \displaystyle\lim_{r\rightarrow 0^{+}}w_{0}(r)=0~and~ w_{0}''(r) \in C^2([0,1]).
\end{equation} Let $w_{n}(r),~n=1,2,\cdots$, be  defined by (\ref{Eq 17}). Then $w_{n}(r),~n=1,2,\cdots$,  satisfies the following properties:
\begin{itemize}\label{item1}
\item $P_{1}: ~w_{n}'(0)=0,$
\end{itemize}
\begin{itemize}
\item $\displaystyle P_{2}:~ \lim_{r\rightarrow 0^{+}}w_{n}(r)=0,$
\end{itemize}
\begin{itemize}
\item $P_{3}:~ w_{n}''(r) \in C^2([0,1]).$
\end{itemize}
\end{lemma}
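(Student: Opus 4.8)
The plan is to argue by induction on $n$. The hypothesis (\ref{Eq 44}) is precisely the statement that $w_0$ satisfies $P_1$, $P_2$, $P_3$, so it suffices to show that whenever $w_n$ satisfies these three properties, the next iterate $w_{n+1}$ defined by (\ref{Eq 17}) satisfies them as well. Under the inductive hypothesis, Lemma~\ref{corollary1} applies to $w_n$, so that $w_n(r)/r^2$ and $w_n'(r)/r$ are bounded on $[0,1]$. I would first record the consequences I need: continuity together with $P_2$ gives $w_n(0)=0$, property $P_1$ gives $w_n'(0)=0$, and $P_3$ gives $w_n\in C^4([0,1])$. Splitting the kernel as $\frac{t-r}{t^2}=\frac1t-\frac{r}{t^2}$ and setting $F_n(t)=t^2w_n''(t)-tw_n'(t)-\tfrac12 w_n^2(t)-\tfrac12\lambda t^4$, the boundedness furnished by Lemma~\ref{corollary1} shows that the integrands $F_n(t)/t$ and $F_n(t)/t^2$ are bounded, hence the integrals $\int_0^r F_n(t)/t\,dt$ and $\int_0^r F_n(t)/t^2\,dt$ converge and define $C^1$ functions on $(0,1]$.

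Next I would differentiate (\ref{Eq 17}) via the Leibniz rule. The diagonal boundary term vanishes because the kernel $\frac{t-r}{t^2}$ is zero at $t=r$, yielding the compact expressions
\begin{equation*}
w_{n+1}'(r)=w_n'(r)-\int_0^r \frac{F_n(t)}{t^2}\,dt, \qquad w_{n+1}''(r)=w_n''(r)-\frac{F_n(r)}{r^2}.
\end{equation*}
Substituting $F_n(r)/r^2=w_n''(r)-\tfrac{w_n'(r)}{r}-\tfrac12\tfrac{w_n^2(r)}{r^2}-\tfrac12\lambda r^2$ cancels the $w_n''$ terms and leaves the clean identity
\begin{equation*}
w_{n+1}''(r)=\frac{w_n'(r)}{r}+\frac12\left(\frac{w_n(r)}{r}\right)^2+\frac12\lambda r^2,
\end{equation*}
valid on $(0,1]$. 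This is exactly the original equation (\ref{Eq 5}) with its nonlinear right-hand side evaluated at $w_n$, which serves as a reassuring consistency check. With these formulas, $P_1$ and $P_2$ follow quickly: letting $r\to0^+$ in the expression for $w_{n+1}'$ gives $w_{n+1}'(0)=w_n'(0)-0=0$, since the integral over a shrinking interval with bounded integrand vanishes; and since $\int_0^r F_n/t\,dt\to0$ and $r\int_0^r F_n/t^2\,dt\to0$ while $w_n(r)\to0$, we get $\lim_{r\to0^+}w_{n+1}(r)=0$.

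The substantive point, and the step I expect to be the main obstacle, is $P_3$: showing that the right-hand side of the displayed identity for $w_{n+1}''$ extends from $(0,1]$ to a function in $C^2([0,1])$, despite the apparent singularities $w_n'(r)/r$ and $w_n^2(r)/r^2$ at the origin. The key tool is the elementary division lemma: if $f\in C^k([0,1])$ with $f(0)=0$, then $f(r)/r=\int_0^1 f'(sr)\,ds$ extends to a function in $C^{k-1}([0,1])$ (by differentiation under the integral sign). Since the inductive hypothesis gives $w_n\in C^4([0,1])$ with $w_n(0)=0$ and $w_n'(0)=0$, applying this lemma yields $w_n(r)/r\in C^3([0,1])$, whence $\left(w_n(r)/r\right)^2\in C^3$, and $w_n'(r)/r\in C^2([0,1])$. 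The binding term is $w_n'(r)/r$, which is only guaranteed to be $C^2$; together with the smooth term $\tfrac12\lambda r^2$ this shows $w_{n+1}''\in C^2([0,1])$, closing the induction. To be fully rigorous I would finally confirm that this $C^2$ extension genuinely coincides with the second derivative of $w_{n+1}$ at $r=0$, which holds because $w_{n+1}''$ possesses a finite limit there.
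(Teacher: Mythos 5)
Your proof is correct, and it shares the paper's skeleton: the same induction, the same Leibniz differentiation of \eqref{Eq 17} giving $w_{n+1}'(r)=w_n'(r)-\int_0^r F_n(t)/t^2\,dt$ and $w_{n+1}''(r)=w_n''(r)-F_n(r)/r^2$ (the paper's \eqref{Eq 52} and \eqref{Eq 53}), and the same bounded-integrand limit arguments for $P_1$ and $P_2$. Where you genuinely diverge is at $P_3$, and there your route is stronger than the paper's. The paper estimates $|w_{n+1}''(r)|\le|w_n''(r)|+|F_n(r)/r^2|$, invokes Lemma~\ref{corollary1} for boundedness, and asserts continuity ``with the evident definition of $w_{n+1}''(0)$''; consistently, its restated inductive hypothesis \eqref{Eq 50} is only that $w_n''$ is \emph{bounded and continuous}. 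That argument yields $w_{n+1}''\in C^0([0,1])$, i.e. $w_{n+1}\in C^2$, which is weaker than the literal claim $P_3:\ w_n''\in C^2([0,1])$ — a (harmless, since only boundedness and continuity are used downstream) mismatch between statement and proof. You instead exploit the exact cancellation of $w_n''$ in \eqref{Eq 53} to get the identity
\begin{equation*}
w_{n+1}''(r)=\frac{w_n'(r)}{r}+\frac{1}{2}\left(\frac{w_n(r)}{r}\right)^{2}+\frac{1}{2}\lambda r^{2},\qquad 0<r\le 1,
\end{equation*}
which exhibits the scheme as a Picard-type iteration, and then apply the Hadamard division lemma ($f\in C^k$, $f(0)=0$ implies $f(r)/r\in C^{k-1}$) to the $C^4$ function $w_n$: this gives $w_n'/r\in C^2$ and $(w_n/r)^2\in C^3$, hence $w_{n+1}''\in C^2([0,1])$, closing the induction at the stated strength. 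The step you defer at the end — that the $C^2$ extension really is the second derivative of $w_{n+1}$ at $r=0$ — is indeed settled by the standard mean-value argument (a continuous function whose derivative has a finite one-sided limit at an endpoint is differentiable there with that value), applied first to $w_{n+1}$ and then to $w_{n+1}'$; the same remark repairs the paper's and your own slightly loose ``set $r=0$'' justification of $P_1$. In short, the paper's route is shorter and suffices for what it later uses; yours costs one extra elementary lemma but actually delivers $P_3$ as stated and makes the structure of the iteration transparent.
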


\begin{proof}
We proof the statement by induction. For  $n=0$ properties $P_{1}$, $P_{2}$, and $P_{3}$ are true. Now we prove that $P_{1}$, $P_{2}$, and $P_{3}$ are true for $n=1$. From equation (\ref{Eq 17}) by setting $n=0$ we get
\begin{equation}\label{Eq 45}
\displaystyle w_{1}(r)=w_{0}(r)+\int_{0}^{r} \frac{t-r}{t^{2}}\left(t^{2}w_{0}''(t)-t {w}_{0}'(t)-\frac{1}{2}{w}_{0}^{2}(t)-\frac{1}{2}\lambda t^{4}\right) dt.
\end{equation}
By differentiating both sides of equation (\ref{Eq 45}) with respect to $r$ we get
\begin{eqnarray}\label{Eq  46}
\nonumber\displaystyle w_{1}'(r)=w_{0}'(r)+\frac{r-r}{r^{2}}\left(r^{2}w_{0}''(r)-r {w}_{0}'(r)-\frac{1}{2}{w}_{0}^{2}(r)-\frac{1}{2}\lambda r^{4}\right)\frac{d}{dr}(r)\\
\nonumber-\lim_{t\rightarrow 0^{+}} \left( \frac{t-r}{t^{2}}\left(t^{2}w_{0}''(t)-t {w}_{0}'(t)-\frac{1}{2}{w}_{0}^{2}(t)-\frac{1}{2}\lambda t^{4}\right)\right)\frac{d}{dr}(0) \\
+\int_{0}^{r} \frac{-1}{t^{2}}\left(t^{2}w_{0}''(t)-t {w}_{0}'(t)-\frac{1}{2}{w}_{0}^{2}(t)-\frac{1}{2}\lambda t^{4}\right) dt.
\end{eqnarray}
Using (\ref{Eq 44}) we can easily get that
\begin{equation}\label{Eq 73}
w_{1}'(r)=w_{0}'(r)+\int_{0}^{r} \frac{-1}{t^{2}}\left(t^{2}w_{0}''(t)-t {w}_{0}'(t)-\frac{1}{2}{w}_{0}^{2}(t)-\frac{1}{2}\lambda t^{4}\right) dt
\end{equation}
and
\begin{equation}\label{Eq 47}
\displaystyle w_{1}''(r)=w_{0}''(r)- \frac{1}{r^{2}}\left(r^{2}w_{0}''(r)-r {w}_{0}'(r)-\frac{1}{2}{w}_{0}^{2}(r)-\frac{1}{2}\lambda r^{4}\right).
\end{equation}
By setting $r=0$ in equation (\ref{Eq 73}) we get $\displaystyle w_{1}'(0)=w_{0}'(0)=0$ via the application of Lemma~\ref{corollary1}.

Using equation (\ref{Eq 44}) and Lemma~\ref{corollary1} it can be easily concluded that the integrand in the integral of (\ref{Eq 45}) is bounded on $[0,1]$. Taking $\displaystyle \lim_{r\rightarrow 0^{+}}$ on both sides of equation (\ref{Eq 45}), we get $\displaystyle \lim_{r\rightarrow 0^{+}} w_{1}(r)=\lim_{r\rightarrow 0^{+}} w_{0}(r)+0=0.$

Now from equation (\ref{Eq 47}), we get
\begin{equation}\label{Eq 49}
\left|w_{1}''(r)\right|\leq \left|w_{0}''(r)\right|+ \left|\frac{1}{r^{2}}\left(r^{2}w_{0}''(r)-r {w}_{0}'(r)-\frac{1}{2}{w}_{0}^{2}(r)-\frac{1}{2}\lambda r^{4}\right)\right|.
\end{equation}
A simple application of \eqref{Eq 44} and Lemma~\ref{corollary1} on the second term of the right hand side of equation (\ref{Eq 49}) reveals that it is bounded on $[0,1]$. Thus $w_{1}''(r)$ is bounded on $[0,1].$ Moreover $w_{1}''(r)$ is continuous with the obvious definition of $w_{1}''(0)$, as can be immediately checked from equation (\ref{Eq 47}) and an argument akin to the one in the proof of Lemma~\ref{corollary1}.

Therefore $P_{1}$, $P_{2}$, and $P_{3}$ hold for $n=1$.

Let us now assume that $P_{1}$, $P_{2}$, and $P_{3}$ hold for an arbitrary $n$, that is
\begin{equation}\label{Eq 50}
w_{n}'(0)=0,~~\lim_{r \rightarrow 0^{+}} w_{n}(r)=0~~\text{and}~~ w _{n}''(r)
\end{equation}
is  bounded and continuous on $[0,1]$.

Now we will show that $P_{1}$, $P_{2}$, and $P_{3}$ hold for $n+1$ too. From equation (\ref{Eq 17})  we get
\begin{equation}\label{Eq 51}
\displaystyle w_{n+1}(r)=w_{n}(r)+\int_{0}^{r} \frac{t-r}{t^{2}}\left(t^{2}w_{n}''(t)-t {w}_{n}'(t)-\frac{1}{2}{w}_{n}^{2}(t)-\frac{1}{2}\lambda t^{4}\right) dt.
\end{equation}
Differentiating both sides  with respect to $r$, we get
\begin{equation}\label{Eq 52}
\displaystyle w_{n+1}'(r)=w_{n}'(r)+\int_{0}^{r} \frac{-1}{t^{2}}\left(t^{2}w_{n}''(t)-t {w}_{n}'(t)-\frac{1}{2}{w}_{n}^{2}(t)-\frac{1}{2}\lambda t^{4}\right) dt,
\end{equation}
and
\begin{equation}\label{Eq 53}
\displaystyle w_{n+1}''(r)=w_{n}''(r)- \frac{1}{r^{2}}\left(r^{2}w_{n}''(r)-r {w}_{n}'(r)-\frac{1}{2}{w}_{n}^{2}(r)-\frac{1}{2}\lambda r^{4}\right).
\end{equation}
By setting $r=0$ in equation (\ref{Eq 52}) we get $\displaystyle w_{n+1}'(0)=w_{n}'(0)=0$ by Lemma~\ref{corollary1}.

Using equation (\ref{Eq 50}) and Lemma~\ref{corollary1} it is easy to see that the integrand inside the integral of (\ref{Eq 51}) is bounded on $[0,1]$. Taking $\displaystyle \lim_{r\rightarrow 0^{+}}$ on both sides of equation (\ref{Eq 51}) we get $\displaystyle \lim_{r\rightarrow 0^{+}} w_{n+1}(r)=\lim_{r\rightarrow 0^{+}} w_{n}(r)+0=0.$

Now from equation (\ref{Eq 53}) we get
\begin{equation}\label{Eq 55}
\left|w_{n+1}''(r)\right|\leq \left|w_{n}''(r)\right|+ \left|\frac{1}{r^{2}}\left(r^{2}w_{n}''(r)-r {w}_{n}'(r)-\frac{1}{2}{w}_{n}^{2}(r)-\frac{1}{2}\lambda r^{4}\right)\right|.
 \end{equation}
A simple application of \eqref{Eq 50} and Lemma~\ref{corollary1} on the second term of the right hand side of equation (\ref{Eq 55}) shows that it is bounded on $[0,1]$. Thus $w_{n+1}''(r)$ is bounded on $[0,1]$ as well. Furthermore $w_{n}''(r)$ is continuous with the evident definition of $w_{n}''(0)$, as can be seen from equation (\ref{Eq 53}) and an analogous argument to that in the proof of Lemma~\ref{corollary1}.

Thus $P_{1}$, $P_{2}$, and $P_{3}$ hold true for $n+1$.

By mathematical induction we conclude that $P_{1}$, $P_{2}$, and $P_{3}$ hold true for all $n\in \mathbb{N}$.
\end{proof}

Lemmata~\ref{corollary1} and~\ref{Lemma1} show that the iterations in our numerical method lead to a well-defined procedure;
now we will illustrate how the method works with a particular example. If $\lambda = 0$ then $w(r) \equiv 0$ is obviously a solution.
And if $|\lambda|$ is very small then the nonlinearity in equation~\eqref{Eq 5} is $O(\lambda^2)$ and therefore negligible.
In this limit the equation to be solved is
\begin{equation}\label{linear}
r^{2} w''(r)-r w'(r)=\frac{1}{2} \lambda r^{4}.
\end{equation}
This is a linear differential equation of Euler type and therefore explicitly solvable, but instead using standard techniques we will proceed to
solve it using our numerical method. A homogeneity argument suggests using an arbitrary fourth degree polynomial as initial condition for the iterations, that is
$$
w_0(r)= a_4 r^4 + a_3 r^3 + a_2 r^2 + a_1 r + a_0,
$$
where $a_4, a_3, a_2, a_1, a_0 \in \mathbb{R}$.
The first iteration shows that the only way to maintain this procedure finite is to set $a_1=0$, so we will do so from now on.
Now it is easy to compute the $n^{\text{th}}$ iteration to find
$$
w_n(r)= \left( \frac{a_4}{3^n} + \frac{\lambda}{16} - \frac{\lambda}{16 \times 3^n} \right) r^4
+ \frac{a_3}{2^n} \, r^3 + a_2 r^2 + a_0.
$$
Also, it is immediate to check that the limit
$$
w_\infty(r) := \lim_{n \to \infty} w_n(r)= \frac{\lambda}{16} r^4 + a_2 r^2 + a_0
$$
solves the equation~\eqref{linear} for any $a_2, a_0 \in \mathbb{R}$. These parameters are fixed by the boundary conditions, in particular,
the condition $\phi'(0)=0$, common to both boundary value problems, translates to
$$
\lim_{r \to 0^+} \frac{w(r)}{r}=0 \Longrightarrow a_0=0.
$$
Finally, for the Dirichlet problem $w(1)=0$ and we find
$$
w_D(r) = \frac{\lambda}{16} \, r^2 (r^2 -1),
$$
 for the Navier problem of type one $w'(1)=0$ and we get
$$
w_{N_{1}}(r) = \frac{\lambda}{16} \, r^2 (r^2 -2),
$$
and for the Navier problem of type two $w(1)=w'(1)$ and we get
$$
w_{N_{2}}(r) = \frac{\lambda}{16} \, r^2 (r^2 -3).
$$
These results together with the common boundary condition $\phi(1)=0$ yield the linear approximations
\begin{eqnarray}\nonumber
\phi_D(r) &=& \frac{\lambda}{64} (r^2 -1)^2, \\
 \nonumber \phi_{N_{1}}(r) &=& \frac{\lambda}{64} (r^4 -4 r^2 +3),\\
\nonumber \phi_{N_{2}}(r) &=& \frac{\lambda}{64} (r^4 -6 r^2 +5),
\end{eqnarray}
to the solutions of equation~\eqref{Eq 3} with $G(r) \equiv 1$, i.~e. solutions to the linear equation
$$
\frac{1}{r} \left \{ r\left[\frac{1}{r}(r \phi')^{'}\right]'\right \}^{'} = \lambda,
$$
for the respective sets of boundary conditions. These linear approximations are represented in Figure~\ref{linearfig}.

A qualitative agreement can be appreciated between the linear approximations and the actual solutions to the nonlinear
problems numerically computed in the next section. Of course, multiplicity of solutions is not found in the linear regime, this phenomenon
only appears when the full nonlinear problem is considered, see section~\ref{numerics}. In this respect, these linear approximations approximate the solutions of the nonlinear problem that lie in vicinity of the origin, the so called lower solutions in the next section. One can appreciate the common geometric features shared by nonlinear lower solutions and linear approximations. Another phenomenon that does not appear at the linear approximation level is non-existence of solutions: this takes place for values of $\lambda$ that lie beyond the range of validity of this approximation. Both phenomena of multiplicity and non-existence of solutions to the nonlinear boundary value problems are discussed in detail in the following section.

All of these considerations show how our numerical method works, and we will now use it to numerically solve the nonlinear problems
under study in the next section.

\begin{figure}[H]
\centering
\subfigure[\,\,$\lambda=1/2$]{\includegraphics[width=.45\linewidth]{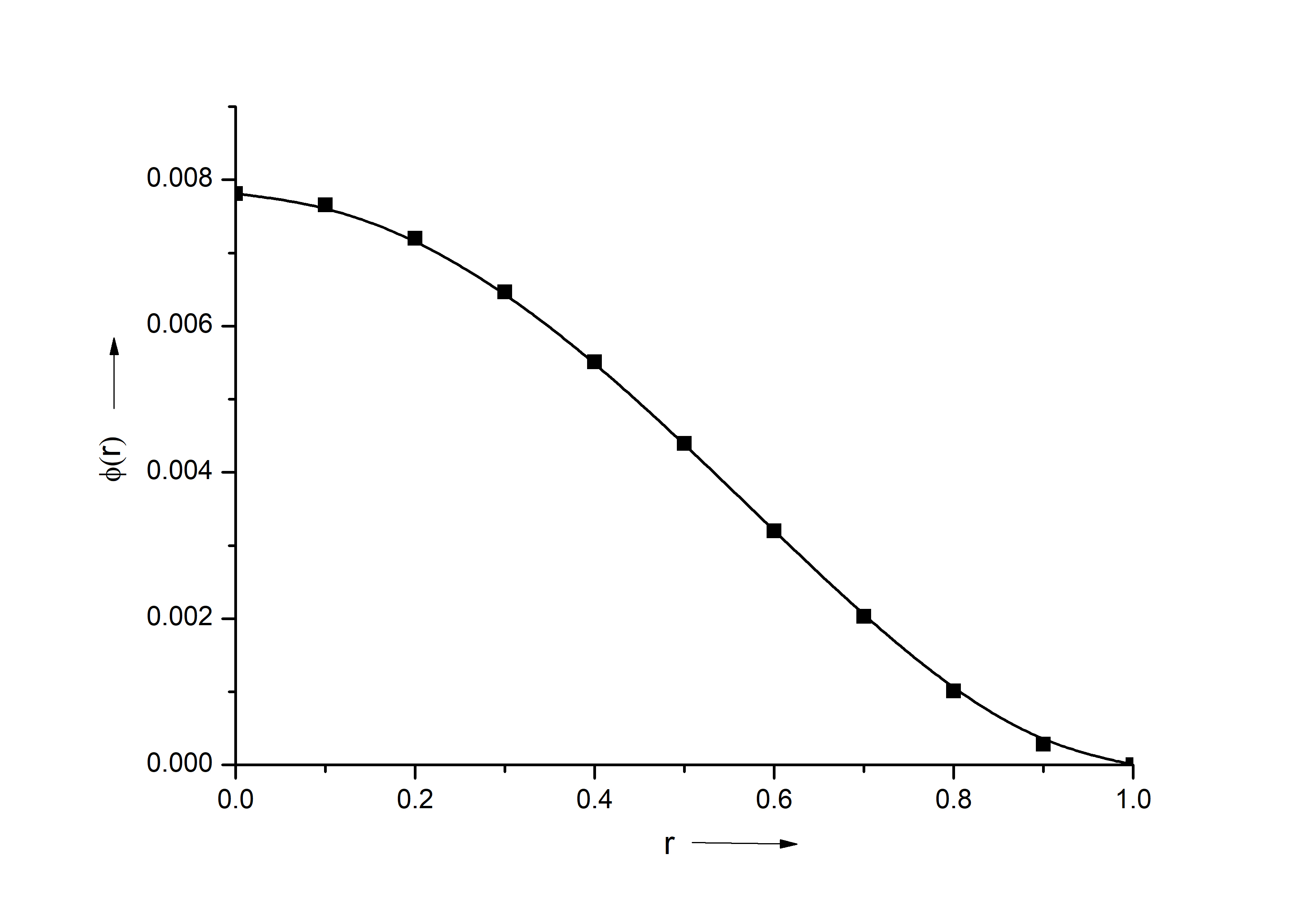}} \hspace{.4cm}
\subfigure[\,\,$\lambda=1$]{\includegraphics[width=.45\linewidth]{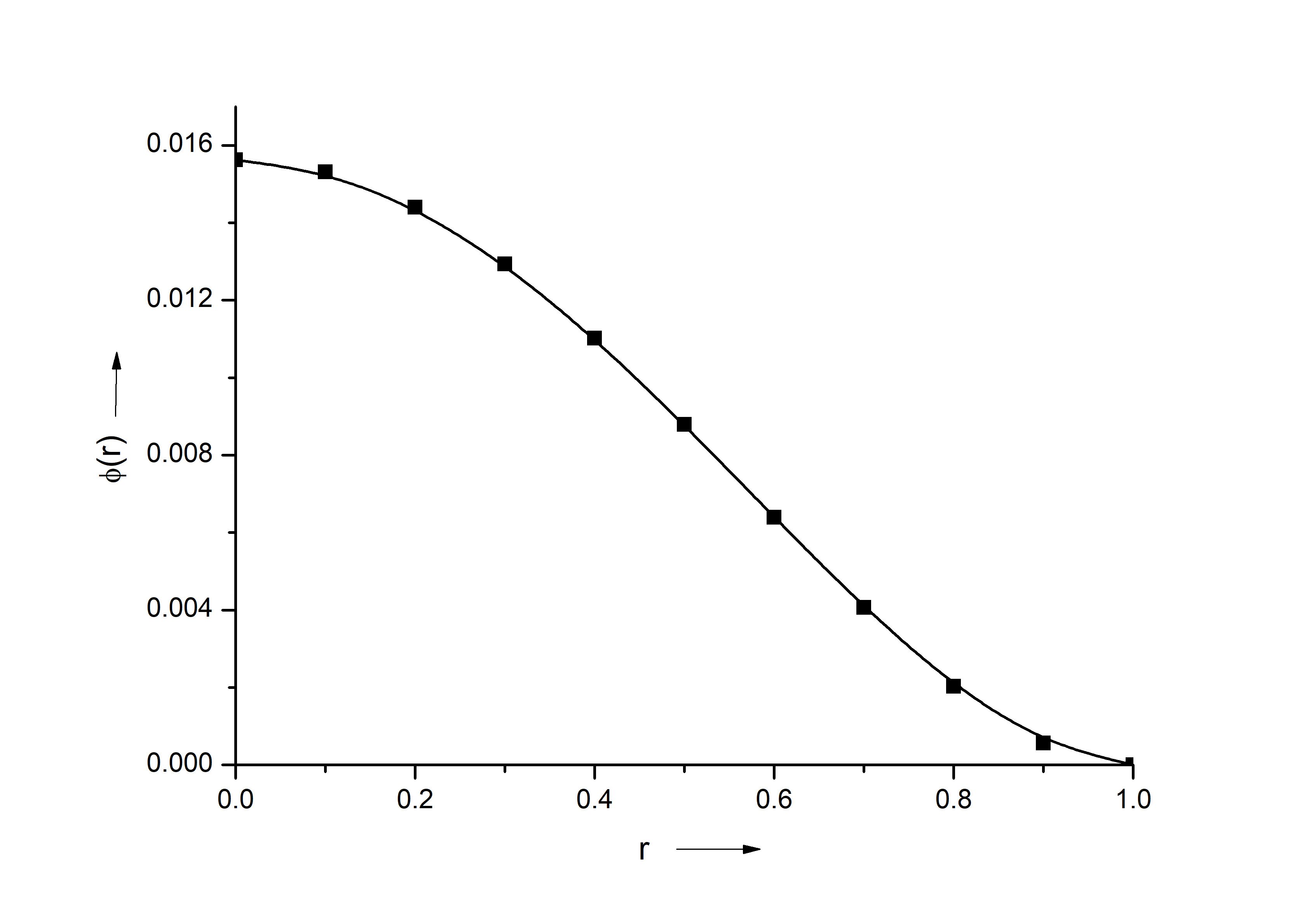}}
\end{figure}
\begin{figure}[H]
\centering
\subfigure[\,\,$\lambda=1/2$]{\includegraphics[width=.45\linewidth]{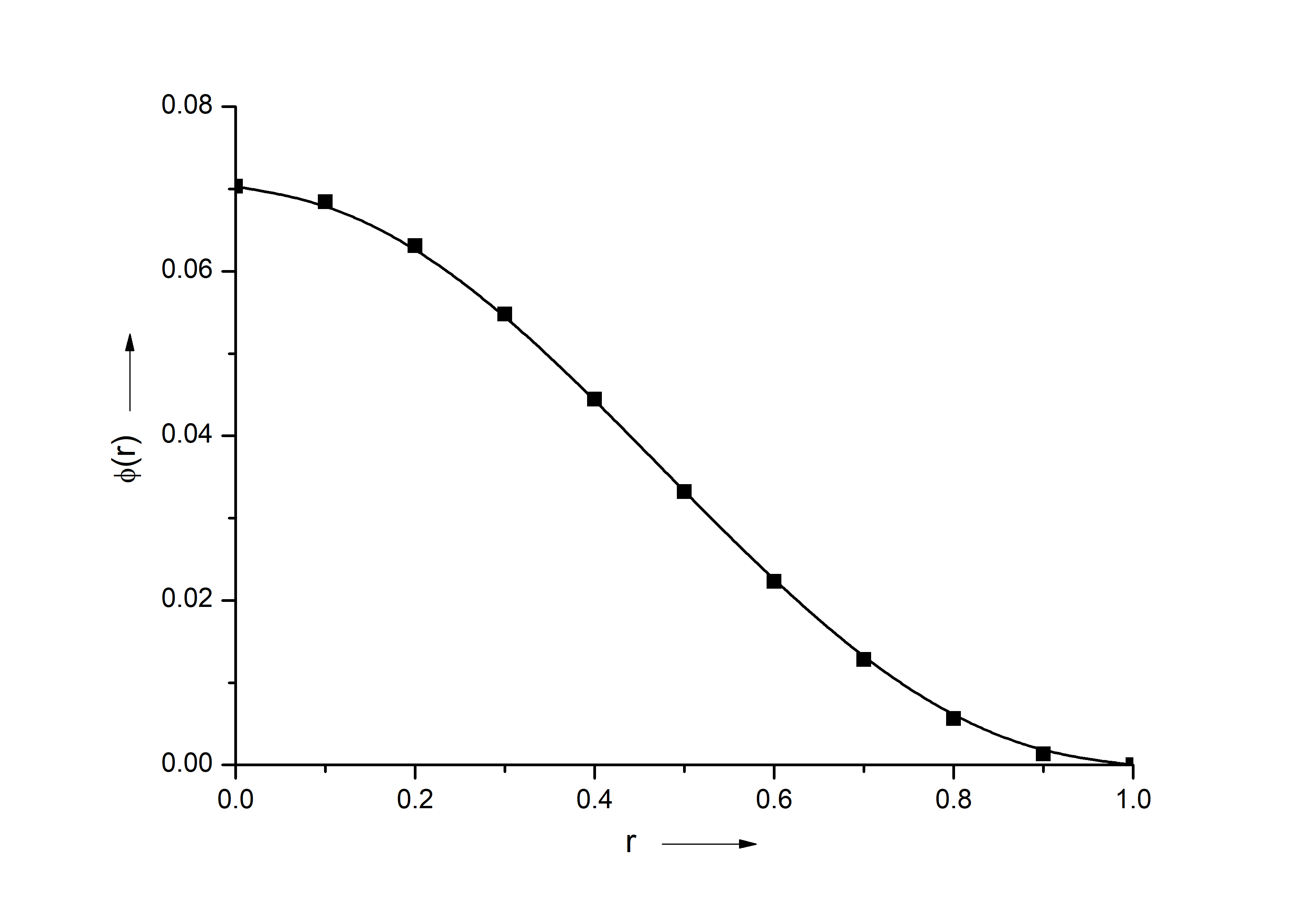}} \hspace{.4cm}
\subfigure[\,\,$\lambda=1$]{\includegraphics[width=.45\linewidth]{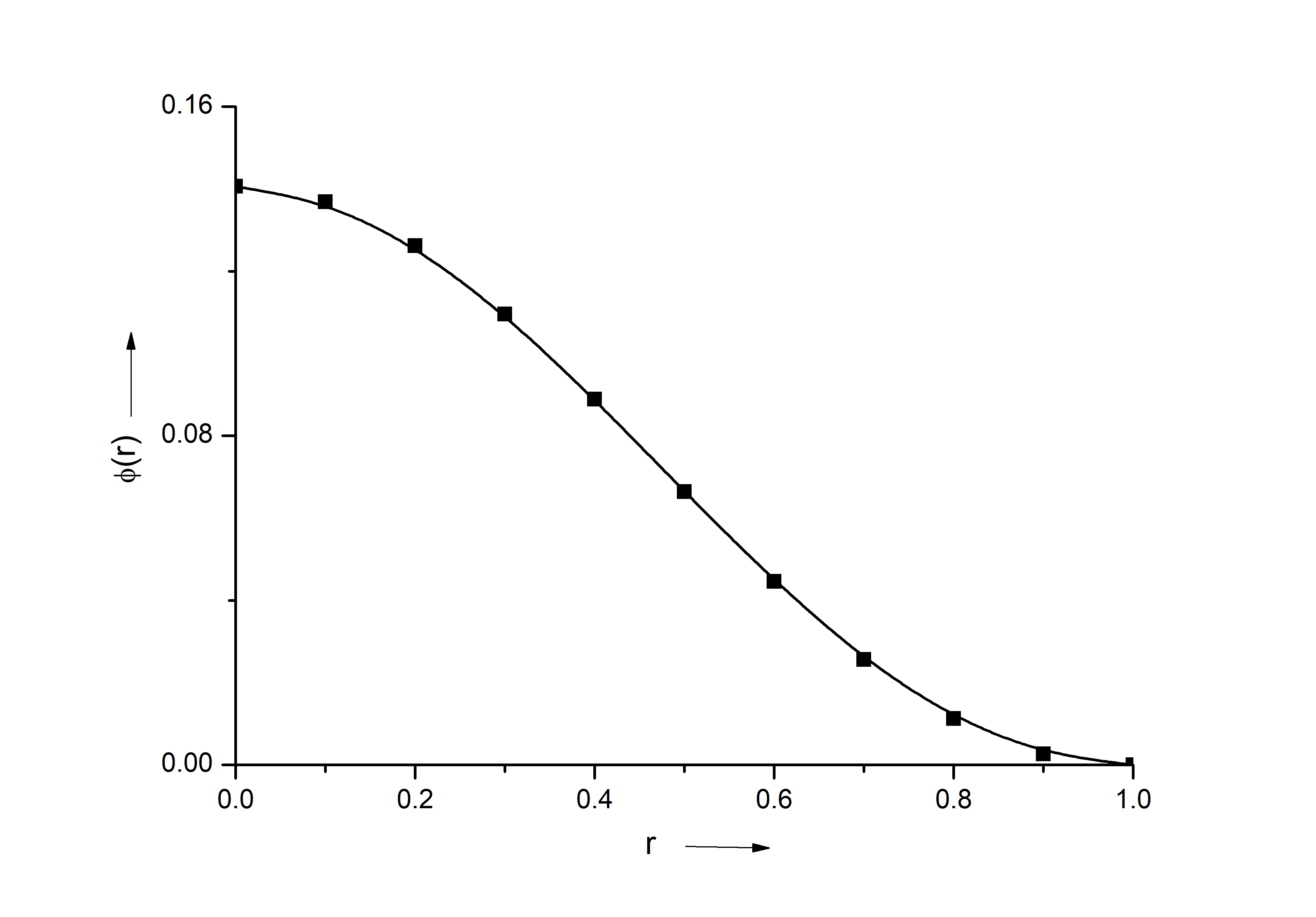}}
\end{figure}
\begin{figure}[H]
\centering
\subfigure[\,\,$\lambda=1/2$]{\includegraphics[width=.45\linewidth]{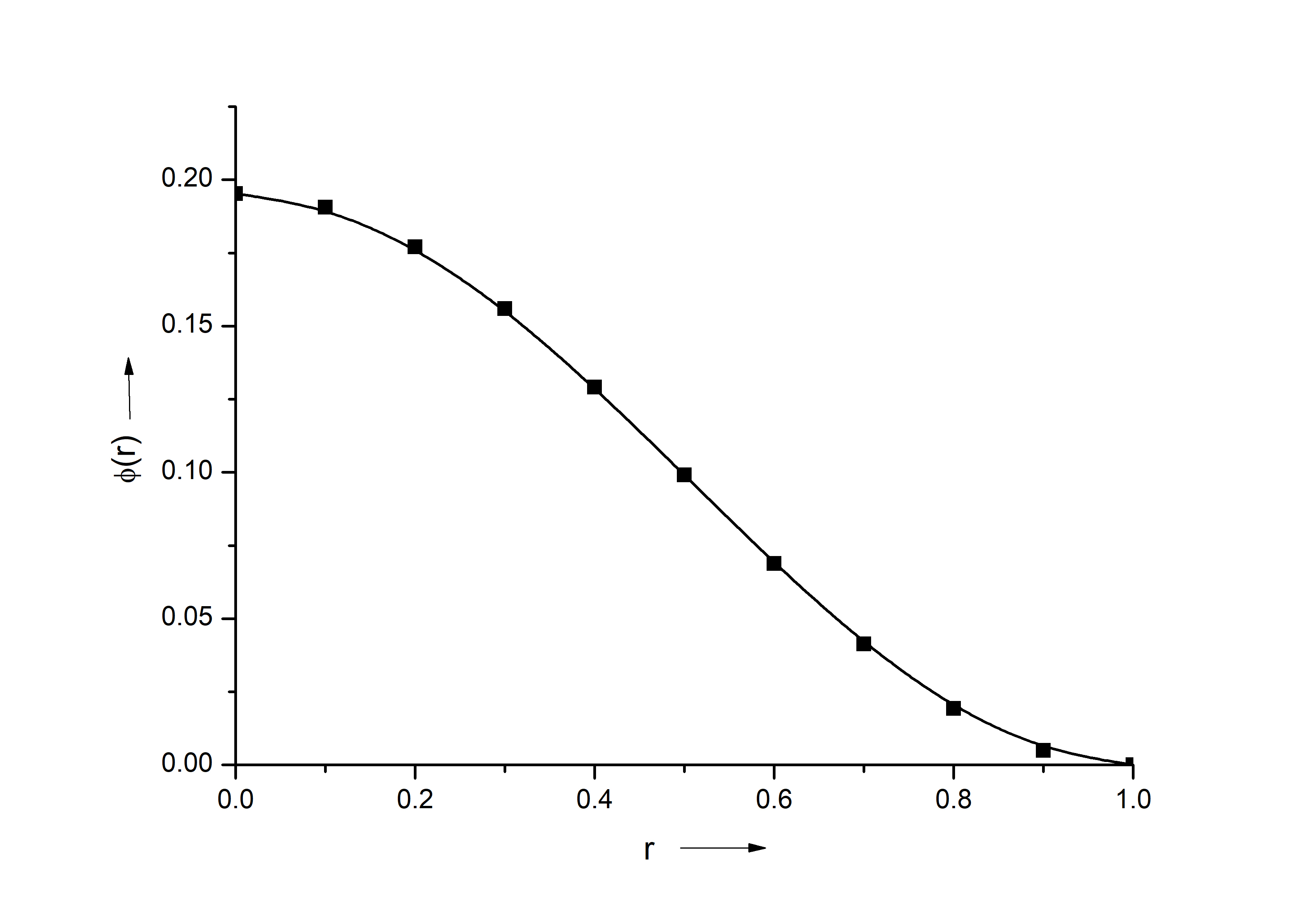}} \hspace{.4cm}
\subfigure[\,\,$\lambda=1$]{\includegraphics[width=.45\linewidth]{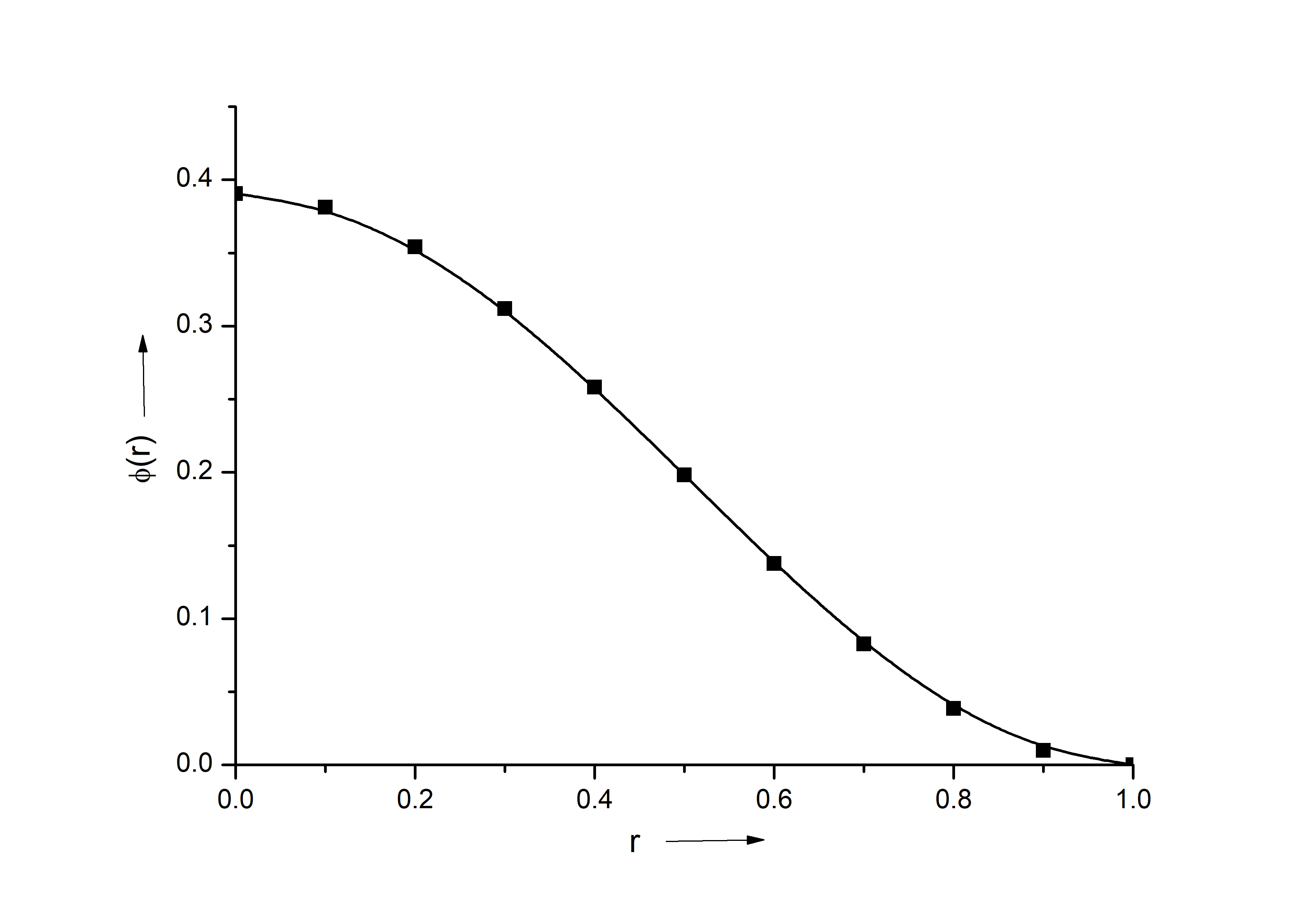}}
\caption{Linear approximations to the solutions of equation~\eqref{Eq 3} with $G(r) \equiv 1$
for Dirichlet boundary conditions with $\lambda=1/2$ (figure (a)) and $\lambda=1$ (figure (b)),
and for Navier boundary conditions of type one with $\lambda=1/2$ (figure (c))  $\lambda=1$ (figure (d)), and for Navier boundary conditions of type two with $\lambda=1/2$ (figure (e)) and $\lambda=1$ (figure (f)),
as explained in the text. Of course, only the solution in the vicinity of the origin is approximated.}
\label{linearfig}
\end{figure}

\section{Numerical illustrations}\label{numerics}

In this section we will discuss about the solvability of differential equation (\ref{Eq 3}) associated to either homogeneous Dirichlet boundary conditions or homogeneous Navier boundary conditions of both types. For this purpose we first apply our iterative numerical scheme on equation (\ref{Eq 5}) and after that using the transformation $w(r)=r \phi'(r)$ and the left boundary conditions we get the solution of (\ref{Eq 3}). In \cite{Carlos2012}, they arrive at two cases:

\noindent{\em \textbf{Case (a)}: $\lambda \geq 0.$}\label{P1Casea}\\
They observe that for $\lambda=0$ there are two solutions. One is trivial and the other is non trivial. For $0<\lambda < \lambda_{\text{critical}}$ they always get two non-trivial solutions. Since the solutions are ordered, they call them respectively the upper and lower solution. The critical value of $\lambda$, i.e. $\lambda_{\text{critical}}$, is approximated to be $11.34$ and $169$ corresponding to Navier boundary condition of type two and Dirichlet boundary condition (\cite{Carlos2012}) respectively. For $\lambda> \lambda_{\text{critical}}$ there does not exist any numerical solutions. No conclusion is given for Navier boundary condition of type one.

\noindent{\em \textbf{Case (b)}: $\lambda < 0.$}\\
No conclusion is given for both types of boundary conditions.
\subsection{Navier boundary condition of type one}\label{P1typetwo}
Here we consider equation (\ref{Eq 5}) subject to
\begin{eqnarray}
w'(0)=0 ~\mbox{and}~w'(1)=0.
\end{eqnarray}
Let $ w_{0}(r)= a r^{2},~~a\in \mathbb{R},$ be an initial approximation. This choice satisfies all the assumptions of Lemma \ref{Lemma1}. By  setting  $n=0$ in equation (\ref{Eq 17}) we get
\begin{equation}\label{Eq 39}
\displaystyle w_{1}(r)=w_{0}(r)+\int_{0}^{r} \frac{t-r}{t^{2}}\left(t^{2}w_{0}''(t)-t {w}_{0}'(t)-\frac{1}{2}{w}_{0}^{2}(t)-\frac{1}{2}\lambda t^{4}\right) dt.
\end{equation}
Successively we get
\begin{eqnarray}
&&\label{Eq 40}
w_1(r)=\frac{a^2 r^4}{24}+a r^2+\frac{\lambda  r^4}{24},\\
&&\label{Eq 41}
w_2(r)=\frac{a^4 r^8}{64512}+\frac{a^3 r^6}{720}+\frac{a^2 \lambda  r^8}{32256}+\frac{a^2 r^4}{18}+\frac{1}{720} a \lambda  r^6+a r^2+\frac{\lambda ^2 r^8}{64512}+\frac{\lambda  r^4}{18},
\end{eqnarray}
and so on.

By using Mathematica we have computed $w_n$ for $n=3,4,5,6,7$, but due to lack of space we could not list all of them. To calculate $\phi(r)$, here  we take $\displaystyle w(r)=w_{7}(r)$ as an approximate solution of (\ref{Eq 5}). So, we found  $w(r)$ is a function of $r$ including  $a$ as a constant and  $\lambda$ as a parameter. By using the boundary condition $w'(1)=0$ we have computed the values of $a$ corresponding to each $\lambda$. Now from transformation $ w(r)=r \phi^{'}(r) $ and boundary condition $\phi(1)=0$ we have computed the solution $\phi(r)$ corresponding to different values of $\lambda$. We arrive at two cases depending on $\lambda$.

\noindent{\em \textbf{Case (c)}: $\lambda \geq 0.$}\\
Here, We observe the same remarks as in Case $(a)$. The critical value of $\lambda$, i.e. $\lambda_{\text{critical}}$, is approximately $31.94$. For $\lambda> \lambda_{\text{critical}}$ numerical solutions do not exist as the values of $a$ become imaginary. In table \ref {P1Table1} and table \ref{P1Table2} we tabulate residual errors.  We have also plotted the graphs of $\phi(r)$. We see that the solutions  are moving towards each other as $\lambda$ is increased [see Figure \ref{P1Figure1}].

\noindent{\em \textbf{Case (d):} $\lambda < 0$.}\\
We observe that we always get two numerical solutions corresponding to each negative $\lambda$. One solution is positive (namely the positive solution) and the other solution is negative (namely the negative solution). There is no negative critical $\lambda$. We list residue errors in tables \ref{P1Table3} and \ref{P1Table4}. We see that the solutions  are aparting from each other as $\lambda$ is decreased [see Figure \ref{P1Figure2}].
\subsection{Navier boundary condition of type two}

In this subsection, we consider :
\begin{eqnarray}
w'(0)=0~\mbox{and}~w(1)=w'(1)
\end{eqnarray}
corresponding to equation (\ref{Eq 5}).

Here we also have the same iterations as in (\ref{Eq 40}) and (\ref{Eq 41}).  To calculate $\phi(r)$, here  we take $\displaystyle w(r)=w_{7}(r)$ as an approximate solution of (\ref{Eq 5}). By using the boundary condition $w(1)=w'(1)$,  $ w(r)=r \phi^{'}(r) $ and boundary condition $\phi(1)=0$, we have computed the solution $\phi(r)$ corresponding to different values of $\lambda$. We arrive at two cases depending on $\lambda$.

\noindent{\em \textbf{Case (e)}: $\lambda \geq 0.$}\\
Same remarks are made as in Case (c) to the subsection \ref{P1typetwo}. The critical value of $\lambda$, i.e. $\lambda_{\text{critical}}$, is approximately $11.34$ (\cite{Carlos2012}).  In table \ref {P1Table5} and table \ref{P1Table6} we tabulate residual errors, and in figure \ref{P1Figure3} we place the approximate solutions graph.

\noindent{\em \textbf{Case (f):} $\lambda < 0.$}\\
All  remarks are same to Case (d) in subsection \ref{P1typetwo}. In table \ref {P1Table7}, table \ref{P1Table8}  and figure \ref{P1Figure4}  we place the numerical data of  approximate solutions.
\subsection{Dirichlet boundary condition}
Furthermore. here we consider Dirichlet boundary conditions 
\begin{eqnarray}
 w'(0)=0 ~\mbox{and}~\quad w(1)=0.
\end{eqnarray}
The iteration scheme is given by equation (\ref{Eq 17}).

Here  we also consider $w_0(r)=a r^{2},~~a\in \mathbb{R},$ as an initial approximation. Then we compute $w_1,w_2,w_3,w_4,w_5,w_6$. We take $w_{6}(r)$ as approximate solution. To calculate the values of $a$ we use the boundary condition $w(1)=0$. Then using the transformation $w(r)=r \phi^{'}$ and $ \phi(1)=0$ we easily get the approximate solution $\phi(r)$. Here we also arrive at two different cases depending on the value of parameter $\lambda$.

\noindent{\em \textbf{Case (g):} $\lambda \geq 0$.}\\
Here  we  have also noticed analogous remarks as in subsection \ref{P1typetwo}. The critical value of $\lambda$, i.e. $\lambda_{\text{critical}}$, is approximately $169$ (\cite{Carlos2012}). The properties of the solutions are presented in table \ref{P1Table9} to table \ref{P1Table10}. We have displayed few graphs [Figure \ref{P1Figure5}] corresponding to some positive values of $\lambda$. 

\noindent{\em \textbf{Case (h):} $\lambda < 0.$}\\
Again, we observe same  remarks  as in subsection \ref{P1typetwo}. In table \ref {P1Table11}, table \ref{P1Table12}  and figure \ref{P1Figure6}  we list all  the numerical data of  approximate solutions.
\section{Tables}\label{P1Tables}
Here, we have listed bellow some numerical data of approximate solutions  corresponding to section \ref{numerics}.
\subsection{Navier boundary condition of type one}
\begin{table}[H]
\caption{\small{Upper solution residue errors  for Navier boundary conditions of type one:}}											
\centering											
\begin{center}											
\resizebox{11cm}{2.2cm}{											
\begin{tabular}	{c   c c c c}										
\hline											
                  											
    $r$     &     $\lambda=0$          &       $\lambda=15$      &       $\lambda=20$       &      $\lambda=31$       \\\hline											
0	&	0	&	0	&	0	&	0	\\
0.1	&	-0.000155912		&	-0.000114429	&	-9.98657E-05	&	-5.76457E-05	\\
0.2	&	-0.000343263	&	-0.000371915	&	-0.000365384	&	-0.000290267	\\
0.3	&	0.001588386		&	0.000620694	&	0.000333768	&	-0.000267199	\\
0.4	&	0.006171169	&		0.003943101	&	0.003114494	&	0.000803931	\\
0.5	&	0.008731215	&	0.007584368	&	0.006742658	&	0.003148673	\\
0.6	&	0.003260515	&	0.00721201	&	0.007721714	&	0.005805328	\\
0.7	&	-0.010610475		&	0.000189979	&	0.003211949	&	0.007033732	\\
0.8	&	-0.026235626		&	-0.011575058	&	-0.006229739	&	0.00548626	\\
0.9	&	-0.035139344	&	-0.02270068	&	-0.01667245	&	0.001231356	\\\hline

\end{tabular}}											
\end{center}											
\label{P1Table1}											
\end{table}
\begin{table}[H]
\caption{\small{Lower solution residue errors  for Navier boundary conditions of type one:}}											
\centering											
\begin{center}											
\resizebox{10cm}{2.2cm}{											
\begin{tabular}	{c   c  c  c   c}										
\hline											
                  											
    $r$     &     $\lambda=0$       &       $\lambda=15$      &       $\lambda=20$   &       $\lambda=31$       \\\hline											
0	&	0	&	0	&	0	&		0	\\
0.1	&	0	&	-8.87083E-06	&	-1.3327E-05	&	-3.45209E-05	\\
0.2	&	0	&	-6.53946E-05	&	-9.45309E-05	&	-0.000206661	\\
0.3	&	0	&	-0.00019217	&	-0.000258526	&	-0.000380835	\\
0.4	&	0	&	-0.000373617	&	-0.000445102	&	-0.00015177	\\
0.5	&	0	&	-0.00056198	&	-0.000542526	&	0.000847379	\\
0.6	&	0	&	-0.000702365	&	-0.000448957	&	0.00257569	\\
0.7	&	0	&	-0.000770031	&	-0.000146034	&	0.004456882	\\
0.8	&	0	&	-0.00081014	&	0.000244377	&	0.005612047	\\
0.9	&	0	&		-0.00097216	&	0.000437111	&	0.005313773	\\\hline

\end{tabular}}											
\end{center}											
\label{P1Table2}											
\end{table}
\begin{table}[H]
\caption{\small{Positive solution residue errors  for Navier boundary conditions of type one:}}											
\centering											
\begin{center}											
\resizebox{11cm}{2.2cm}{											
\begin{tabular}	{c  c  c  c   c}										
\hline											
                  											
    $r$     &     $\lambda=-1$         &       $\lambda=-40$      &       $\lambda=-60$   &         $\lambda=-100$       \\\hline											
0	&	0	&	0	&	0	&	0	\\
0.1	&	-0.000158668	&	-0.000275753	&	-0.000348062	&	-0.000530218	\\
0.2	&	-0.000339023	&	8.87605E-05	&	0.000591477	&	0.002846296	\\
0.3	&	0.001659012	&	0.005084608	&	0.007385895	&		0.012677871	\\
0.4	&	0.006308302	&	0.010250819	&	0.010473629	&		0.001611918	\\
0.5	&	0.00874322	&	0.002958666	&	-0.005421612	&	-0.0359395	\\
0.6	&	0.00289219	&	-0.018916565	&	-0.033821897	&	-0.059343222	\\
0.7	&	-0.011384538	&	-0.040769892	&	-0.05065471	&	-0.040737203	\\
0.8	&	-0.027110199	&	-0.046053985	&	-0.040944067	&	0.006597174	\\
0.9	&	-0.035680636	&	-0.03035376	&	-0.009364712	&	0.056747965	\\\hline

\end{tabular}}											
\end{center}											
\label{P1Table3}											
\end{table}
\begin{table}[H]
\caption{\small{Negative solution residue errors  for Navier boundary conditions of type one:}}											
\centering											
\begin{center}											
\resizebox{11cm}{2.2cm}{											
\begin{tabular}	{c c  c  c   c}										
\hline											
                  											
    $r$     &     $\lambda=-1$       &           $\lambda=-40$      &       $\lambda=-60$   &        $\lambda=-100$       \\\hline											
0	&	0	&	0	&	0	&	0	\\
0.1	&	4.50911E-07	&	1.20523E-05	&	1.55973E-05	&	2.05055E-05	\\
0.2	&	3.62314E-06	&	0.000109664	&	0.000148369	&	0.000209255	\\
0.3	&	1.23135E-05	&	0.000447296	&	0.00064218	&	0.000990024	\\
0.4	&	2.94528E-05	&	0.001327808	&	0.002032183	&	0.003435276	\\
0.5	&	5.81282E-05	&	0.003293586	&	0.005368521	&	0.009921185	\\
0.6	&	0.000101552	&	0.007206178	&	0.012468993	&	0.025072982	\\
0.7	&	0.000162956	&	0.014243773	&	0.026048476	&	0.056683522	\\
0.8	&	0.000245387	&	0.025666456	&	0.049332478	&	0.115399675	\\
0.9	&	0.000351386	&	0.042091333	&	0.08437909	&	0.210205598	\\\hline

\end{tabular}}											
\end{center}											
\label{P1Table4}											
\end{table}
\subsection{Navier boundary condition of type two}
\begin{table}[H]
\caption{\small{Upper solution residue errors  for Navier boundary conditions of type two:}}											
\centering											
\begin{center}											
\resizebox{11cm}{2.2cm}{											
\begin{tabular}	{c  c   c  c c}										
\hline											
                  											
    $r$     &     $\lambda=0$       &            $\lambda=8$      &       $\lambda=10$   &        $\lambda=11.34$       \\\hline											
0	&	0	&	0	&	0	&		0	\\
0.1	&	-3.59128E-05	&	-2.41406E-05	&	-1.98602E-05	&	-1.36854E-05	\\
0.2	&	-0.00019189	&	-0.000144885	&	-0.000124646	&		-9.1929E-05	\\
0.3	&	-0.000229253	&	-	-0.00026456	&	-0.000257591	&		-0.000223497	\\
0.4	&	0.000379522	&		-7.65776E-05	&	-0.000194005	&		-0.000294198	\\
0.5	&	0.001951425	&		0.00073382	&	0.000336884	&		-0.000128778	\\
0.6	&	0.004095042	&		0.002209409	&	0.001454893	&		0.000417341	\\
0.7	&	0.005675344	&		0.003964315	&	0.002983797	&		0.001363933	\\
0.8	&	0.005331432	&		0.005273065	&	0.004455361	&		0.00255888	\\
0.9	&	0.002218034	&		0.005369172	&	0.005271528	&		0.003704074	\\\hline

\end{tabular}}											
\end{center}											
\label{P1Table5}											
\end{table}

\begin{table}[H]											
\caption{\small{Lower solution residue errors  for Navier boundary conditions of type two:}}											
\centering											
\begin{center}											
\resizebox{10cm}{2.2cm}{											
\begin{tabular}{c  c  c  c   c}											
\hline											
                  											
    $r$     &     $\lambda=0$       &             $\lambda=8$      &       $\lambda=10$   &          $\lambda=11.34$       \\\hline											
0	&	0		&	0	&	0	&		0	\\
0.1	&	0	&		-5.31802E-06	&	-8.03463E-06	&	-1.31725E-05	\\
0.2	&	0	&		-3.95382E-05	&	-5.77125E-05	&	-8.90043E-05	\\
0.3	&	0	&		-0.00011771	&	-0.000161094	&	-0.000219188	\\
0.4	&	0	&	-0.000232269	&	-0.00028476	&	-0.000297926	\\
0.5	&	0	&	-0.000352779	&	-0.00035549	&	-0.000159086	\\
0.6	&	0	&	-0.000434856	&	-0.00028637	&	0.000339477	\\
0.7	&	0	&	-0.000436068	&	-1.38768E-05	&	0.001227054	\\
0.8	&	0	&	-0.00033461	&	0.000467775	&	0.002372209	\\
0.9	&	0	&	-0.000146031	&	0.001087285	&	0.003501342		\\\hline
\end{tabular}}											
\end{center}
\label{P1Table6}																		
\end{table}
\begin{table}[H]
\caption{\small{Positive solution residue errors  for Navier boundary conditions of type two:}}									
\centering											
\begin{center}											
\resizebox{11cm}{2.2cm}{											
\begin{tabular}	{  c  c  c c c}										
\hline											
                  											
    $r$     &     $\lambda=-1$       &       $\lambda=-50$      &       $\lambda=-100$   &       $\lambda=-160$       \\\hline											
0	&	0		&	0	&	0	&		0	\\
0.1	&	-3.71856E-05	&		-9.52198E-05	&	-0.000171484	&		-0.0003356	\\
0.2	&	-0.000196294		&	-0.000293491	&	-0.000196499	&		0.000690227	\\
0.3	&	-0.000221431	&		0.000634465	&	0.002515105	&		0.007458185	\\
0.4	&	0.000437305	&		0.003624522	&	0.007340292	&		0.009598758	\\
0.5	&	0.002086313	&		0.006721547	&	0.006882551	&		-0.008688033	\\
0.6	&	0.004270952	&		0.005539989	&	-0.00553502	&		-0.039604188	\\
0.7	&	0.005768587	&		-0.002998379	&	-0.026584976	&		-0.055617692	\\
0.8	&	0.005166738	&		-0.017397284	&	-0.043629852	&		-0.037115791	\\
0.9	&	0.001651162	&		-0.031769072	&	-0.043368929	&		0.015496504	\\\hline
								
\end{tabular}}											
\end{center}											
\label{P1Table7}											
\end{table}

\begin{table}[H]
\caption{\small{Negative solution residue errors  for Navier boundary conditions of type two:}}											
\centering											
\begin{center}											
\resizebox{11cm}{2.2cm}{											
\begin{tabular}	{  c  c  c c  c}										
\hline											
                  											
    $r$     &     $\lambda=-1$       &              $\lambda=-50$      &       $\lambda=-100$   &          $\lambda=-160$       \\\hline											
0	&	0	&		0	&	0	&		0	\\
0.1	&	0	&		1.0114E-05	&	1.37015E-05	&		1.59205E-05	\\
0.2	&	0	&		9.66798E-05	&	0.000143445	&		0.000179917	\\
0.3	&	1.21638E-05	&		0.000422316	&	0.000702496	&		0.000963885	\\
0.4	&	2.92389E-05	&		0.001356286	&	0.002538349	&		0.003806251	\\
0.5	&	5.80708E-05	&		0.003662489	&	0.007683917	&		0.012541867	\\
0.6	&	0.000102229	&		0.008777896	&	0.020536832	&		0.036345698	\\
0.7	&	0.000165517	&		0.019165906	&	0.049731562	&		0.095077006	\\
0.8	&	0.000251823	&		0.038627935	&	0.110500915	&		0.227275426	\\
0.9	&	0.000364852	&		0.072242013	&	0.226171011	&		0.497848871	\\\hline

\end{tabular}}											
\end{center}											
\label{P1Table8}											
\end{table}
\subsection{Dirichlet boundary condition}
\begin{table}[H]											
\caption{\small{Lower solution residue errors  for Dirichlet  boundary conditions:}}											
\centering											
\begin{center}											
\resizebox{10cm}{2.2cm}{											
\begin{tabular}{  c  c  c  c  c}											
\hline											
                  											
    $r$     &     $\lambda=0$       &             $\lambda=100$      &       $\lambda=150$   &          $\lambda=168.5$       \\\hline											
0	&	0		&	0	&	0	&		0	\\
0.1	&	0	&		-0.000206385	&	-0.000456193	&		-0.000792126	\\
0.2	&	0	&		-0.001202861	&	-0.00172148	&		-0.001148293	\\
0.3	&	0	&		-0.002150566	&	0.000877891	&		0.009654358	\\
0.4	&	0	&		-0.001088836	&	0.011254878	&		0.028840292	\\
0.5	&	0	&		0.002659014	&	0.023546219	&		0.03096701	\\
0.6	&	0	&		0.00643131	&	0.025416409	&		0.002113967	\\
0.7	&	0	&		0.00420898	&	0.011384789	&		-0.032940028	\\
0.8	&	0	&		-0.012205803	&	-0.008754913	&		-0.02467811	\\
0.9	&	0	&		-0.053923736	&	-0.012406316	&		0.083542791	\\\hline
									
\end{tabular}}											
\end{center}											
\label{P1Table9}											
\end{table}
\begin{table}[H]											
\caption{\small{Upper solution residue errors  for Dirichlet  boundary conditions:}}											
\centering											
\begin{center}											
\resizebox{11cm}{2.2cm}{											
\begin{tabular}{c  c  c  c   c}											
\hline											
                  											
    $r$     &     $\lambda=0$       &            $\lambda=100$      &       $\lambda=150$   &           $\lambda=168.5$       \\\hline											
0	&	0		&	0	&	0	&		0	\\
0.1	&	-0.002203063	&		-0.001886917	&	-0.001342885	&		-0.000877685	\\
0.2	&	0.033797318	&		0.011517037	&	0.002749495	&		-0.000788042	\\
0.3	&	0.030922362	&		0.044532539	&	0.02827772	&		0.012361223	\\
0.4	&	-0.014345268	&		-0.003159996	&	0.035900291		&	0.03213499	\\
0.5	&	-0.022139308		&	-0.012445316	&	-0.020163177	&		0.027844336	\\
0.6	&	-0.060894281	&		-0.165508967	&	-0.095517516	&		-0.01052225	\\
0.7	&	0.016518058	&		-0.068424986	&	-0.010280418	&		-0.047258847	\\
0.8	&	0.076289167	&		0.097795772	&	-0.004867326	&		-0.026755307	\\
0.9	&	0.756076643	&		0.455082275	&	0.218123002	&		0.107541122	\\\hline
											
\end{tabular}}											
\end{center}											
\label{P1Table10}											
\end{table}
\begin{table}[H]
\caption{\small{ Negative solution residue errors   for Dirichlet  boundary conditions:}}											
\centering											
\begin{center}											
\resizebox{11cm}{2.2cm}{											
\begin{tabular}	{  c  c  c  c  c}										
\hline											
                  											
    $r$     &     $\lambda=-1$       &              $\lambda=-10$      &       $\lambda=-15$   &         $\lambda=-25$       \\\hline											
0	&	0		&	0	&	0	&		0	\\
0.1	&	1.37E-06		&	1.32986E-05	&	1.96493E-05	&		3.18005E-05	\\
0.2	&	1.09624E-05	&		0.000108594	&	0.00016204	&		0.000267254	\\
0.3	&	3.71163E-05	&		0.000377932	&	0.00057214	&		0.000969753	\\
0.4	&	8.8313E-05	&		0.000928682	&	0.001429442	&		0.002498697	\\
0.5	&	0.000173129	&		0.001878535	&	0.002937946	&		0.005288295	\\
0.6	&	0.000300006	&		0.003333426	&	0.005277309	&		0.009716634	\\
0.7	&	0.000476799	&		0.005340493	&	0.008494767	&		0.015794103	\\
0.8	&	0.000710047	&		0.00780818	&	0.012317241	&		0.022597391	\\
0.9	&	0.0010039	&		0.010388382	&	0.015872278	&		0.027416471	\\\hline
								
\end{tabular}}											
\end{center}											
\label{P1Table11}											
\end{table}

\begin{table}[H]
\caption{\small{ Positive solution residue errors   for Dirichlet  boundary conditions:}}											
\centering											
\begin{center}											
\resizebox{11cm}{2.2cm}{											
\begin{tabular}	{c  c  c    c  c}										
\hline											
                  											
    $r$     &     $\lambda=-1$       &             $\lambda=-10$      &       $\lambda=-15$   &         $\lambda=-25$       \\\hline											
0	&	0		&	0	&	0	&		0	\\
0.1	&	-0.002200389	&		-0.002169851	&	-0.002147706	&		-0.00209184	\\
0.2	&	0.034043159	&		0.036262321	&	0.03749797	&	0.039966042	\\
0.3	&	0.030454157	&		0.025927616	&	0.023170528	&		0.017144747	\\
0.4	&	-0.144959813		&	-0.158409209	&	-0.165767258	&		-0.18015219	\\
0.5	&	-0.022137696		&	-0.02202973	&	-0.021897521	&		-0.02148112	\\
0.6	&	-0.058843725		&	-0.039859043	&	-0.028938252	&		-0.006427889	\\
0.7	&	0.016822394	&		0.019683635	&	0.021384849	&		0.025094771	\\
0.8	&	0.078154323	&		0.097632679	&	0.11090339	&		0.144151755	\\
0.9	&	0.802566715	&		0.109459893	&	0.130400343	&	0.186695393	\\\hline
						
\end{tabular}}											
\end{center}											
\label{P1Table12}											
\end{table}
\section{Figures}\label{P1Figures}
Here we have placed below approximate solutions graph corresponding to different types of boundary condition.
\subsection{Navier boundary condition of type one}
\begin{figure}[H]
\centering
\subfigure[\,\,$\lambda=0$]{\includegraphics[width=.45\linewidth]{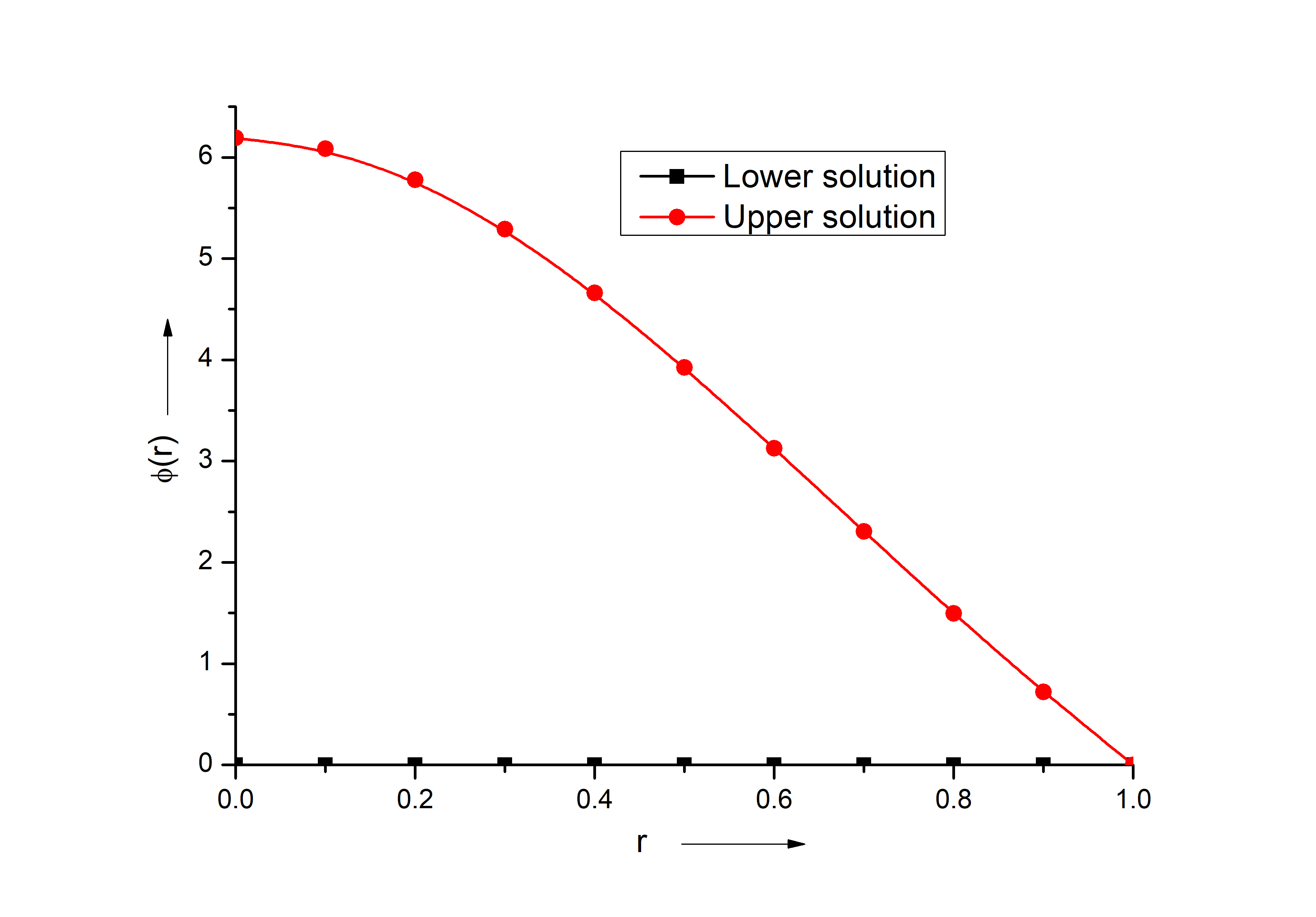}} 
\subfigure[\,\,$\lambda=15$]{\includegraphics[width=.45\linewidth]{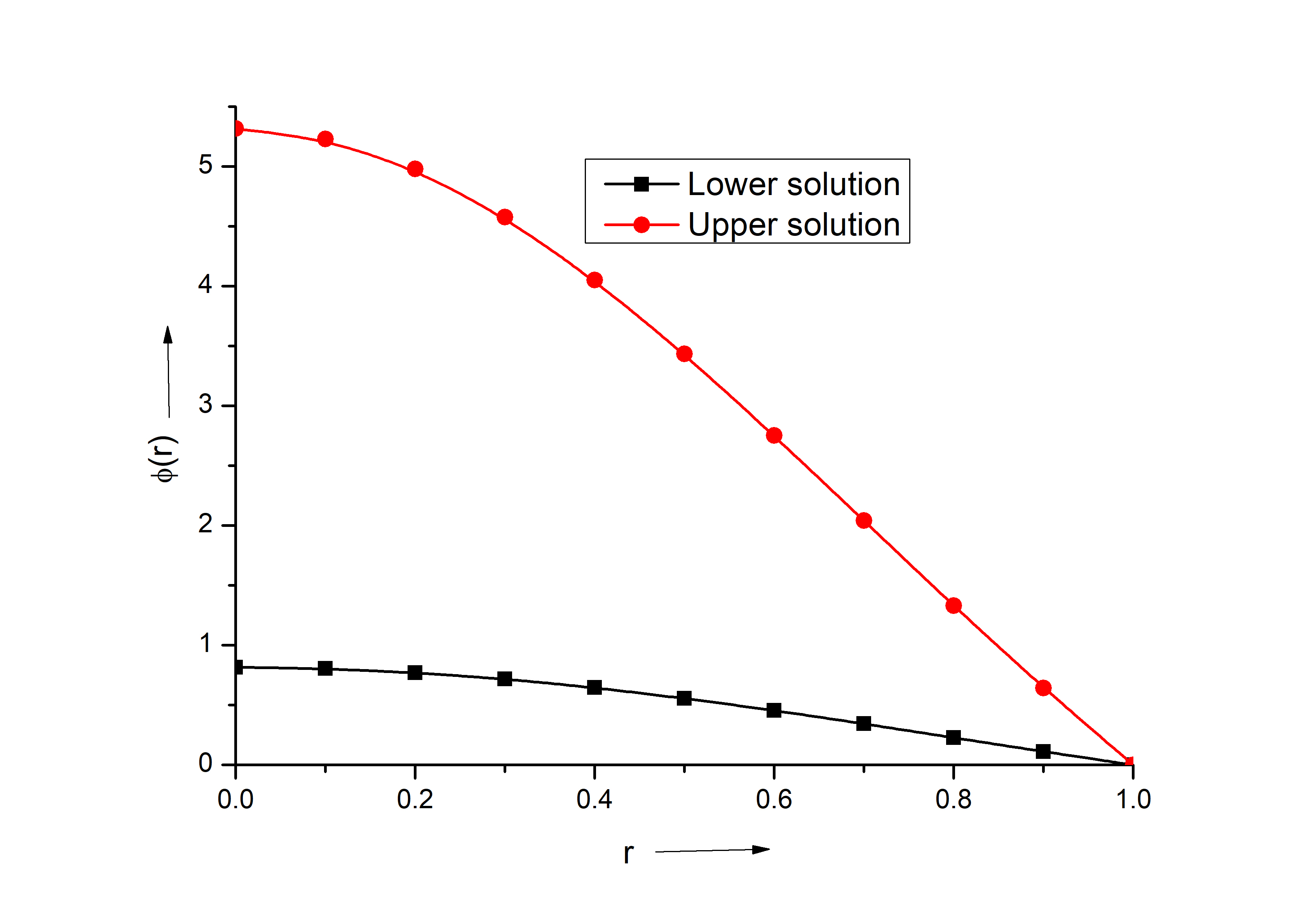}}  
\end{figure}
\begin{figure}[H]
\centering
\subfigure[\,\,$\lambda=20$]{\includegraphics[width=.45\linewidth]{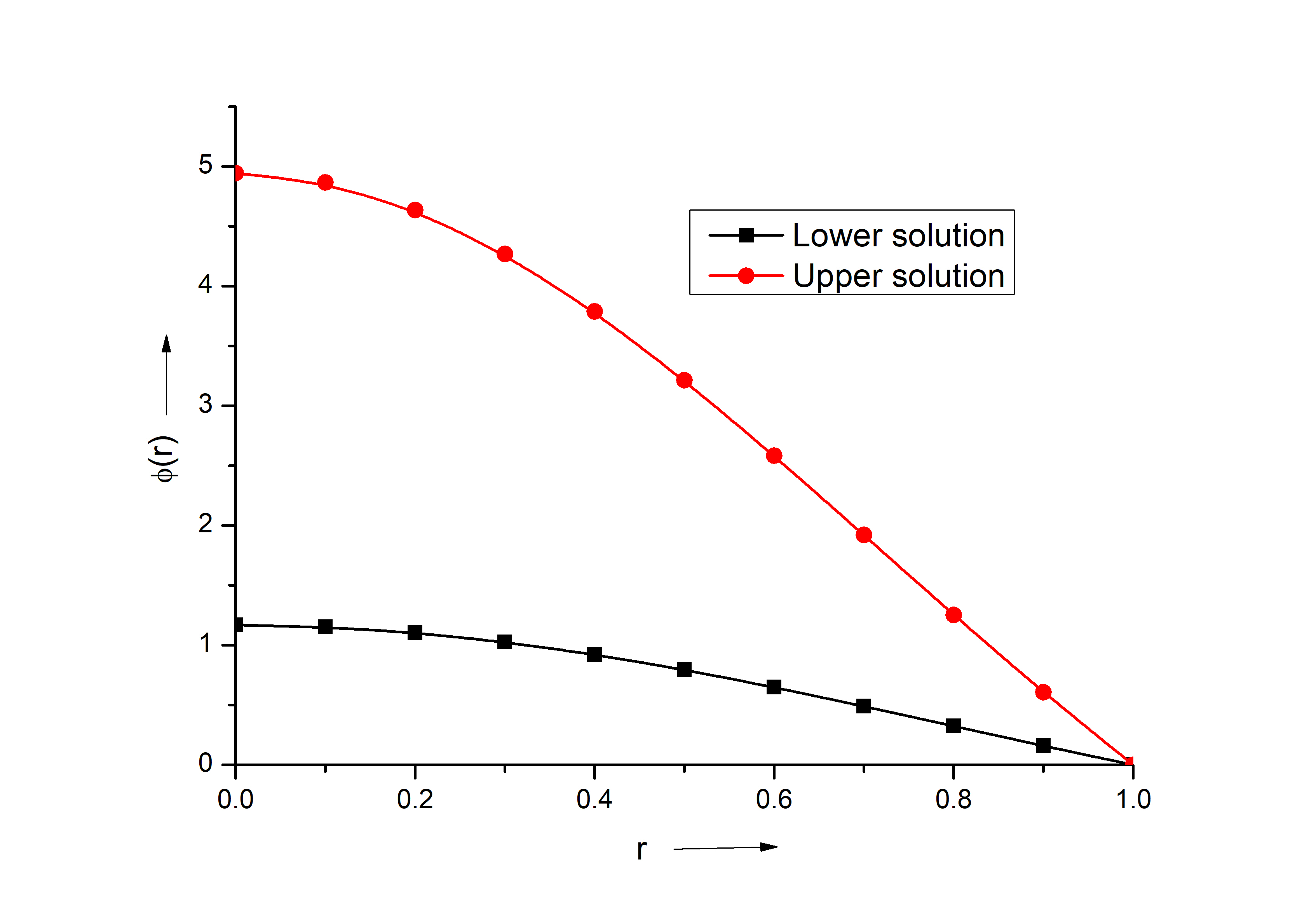}}  
\subfigure[\,\,$\lambda=31$]{\includegraphics[width=.45\linewidth]{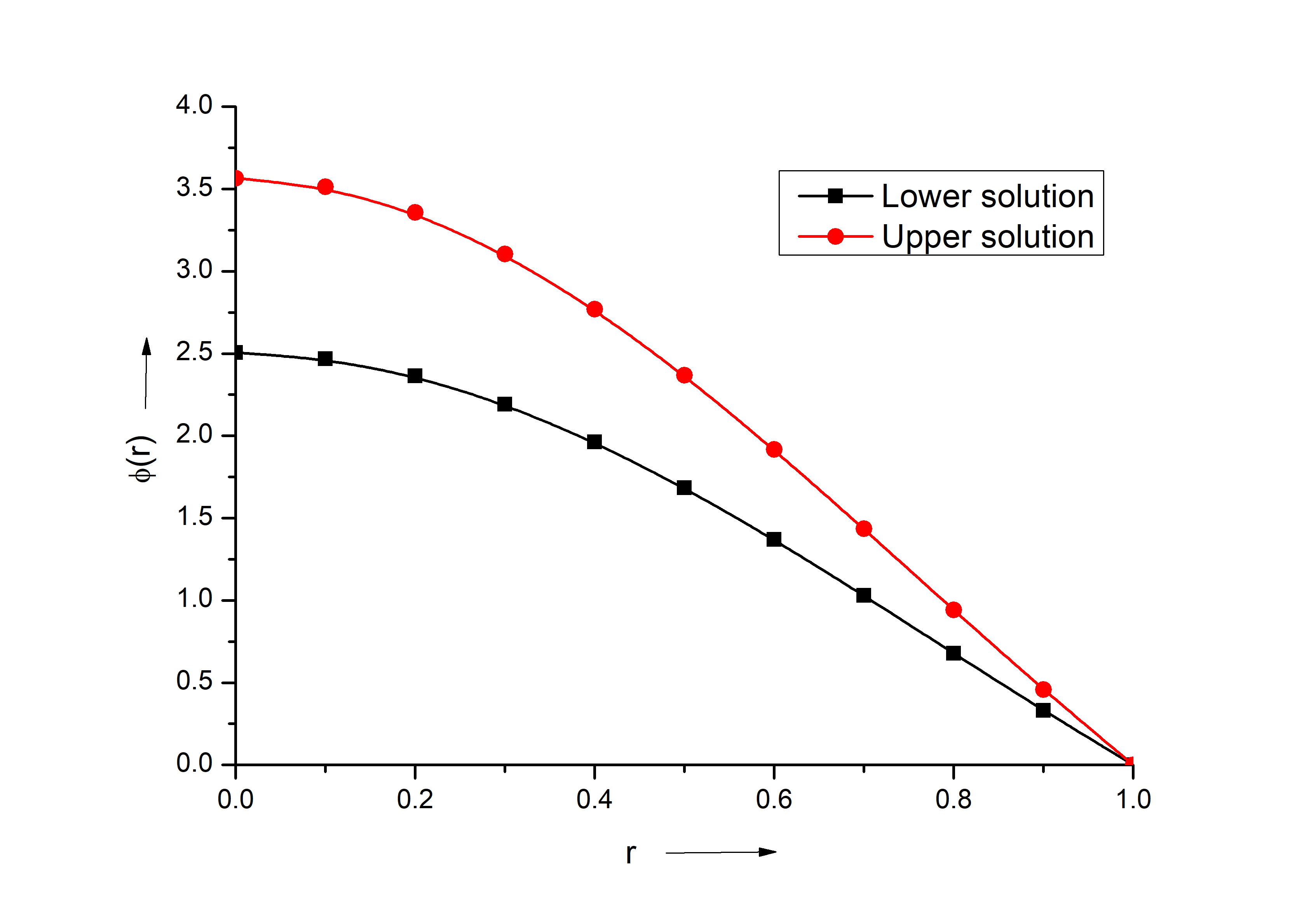}} 
\caption{Graph of $\phi(r)$ versus $r$ for positive $\lambda$.}
\label{P1Figure1}
\end{figure}

\begin{figure}[H]
\centering
\subfigure[\,\,$\lambda=-1$]{\includegraphics[width=.45\linewidth]{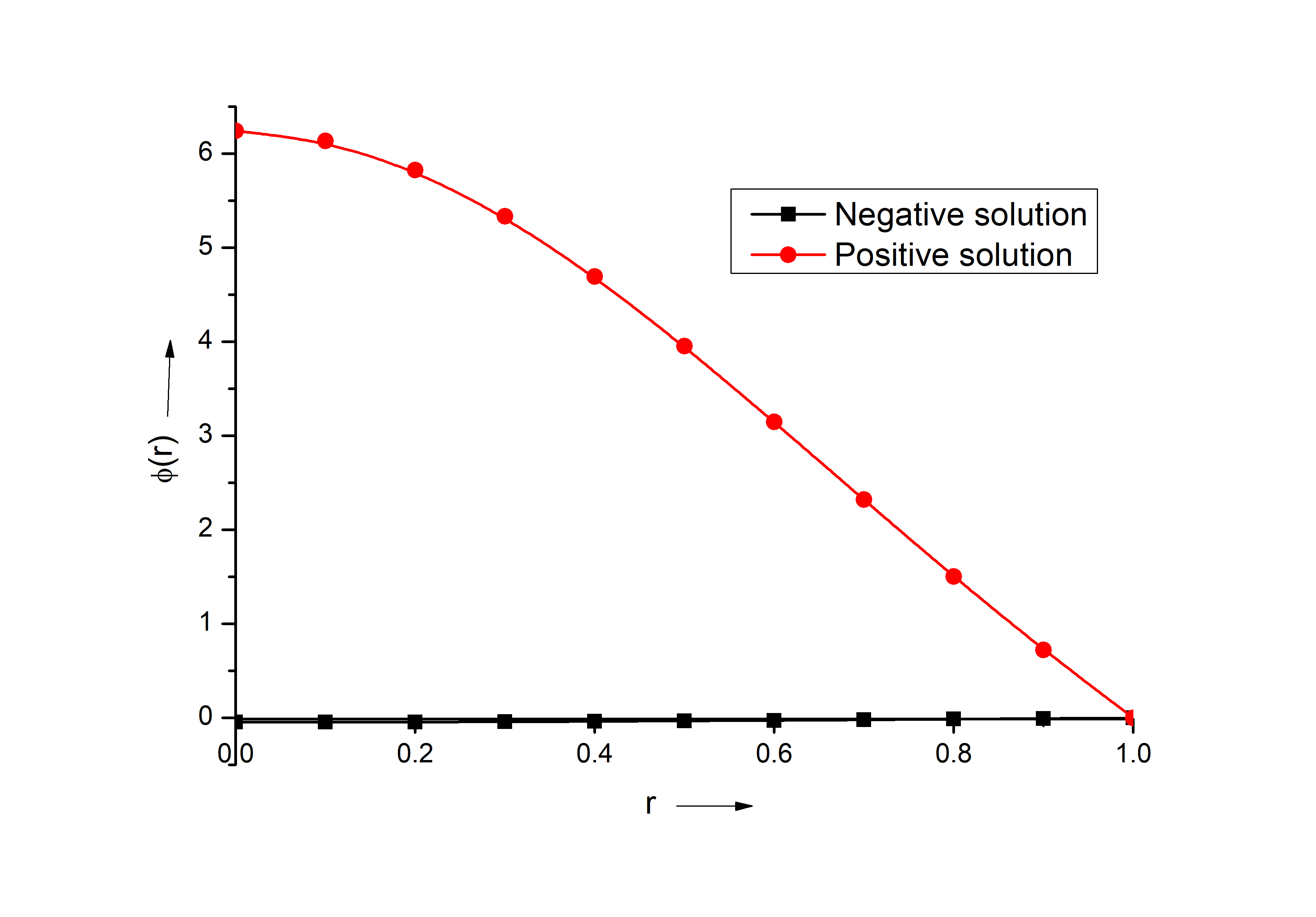}} 
\subfigure[\,\,$\lambda=-40$]{\includegraphics[width=.45\linewidth]{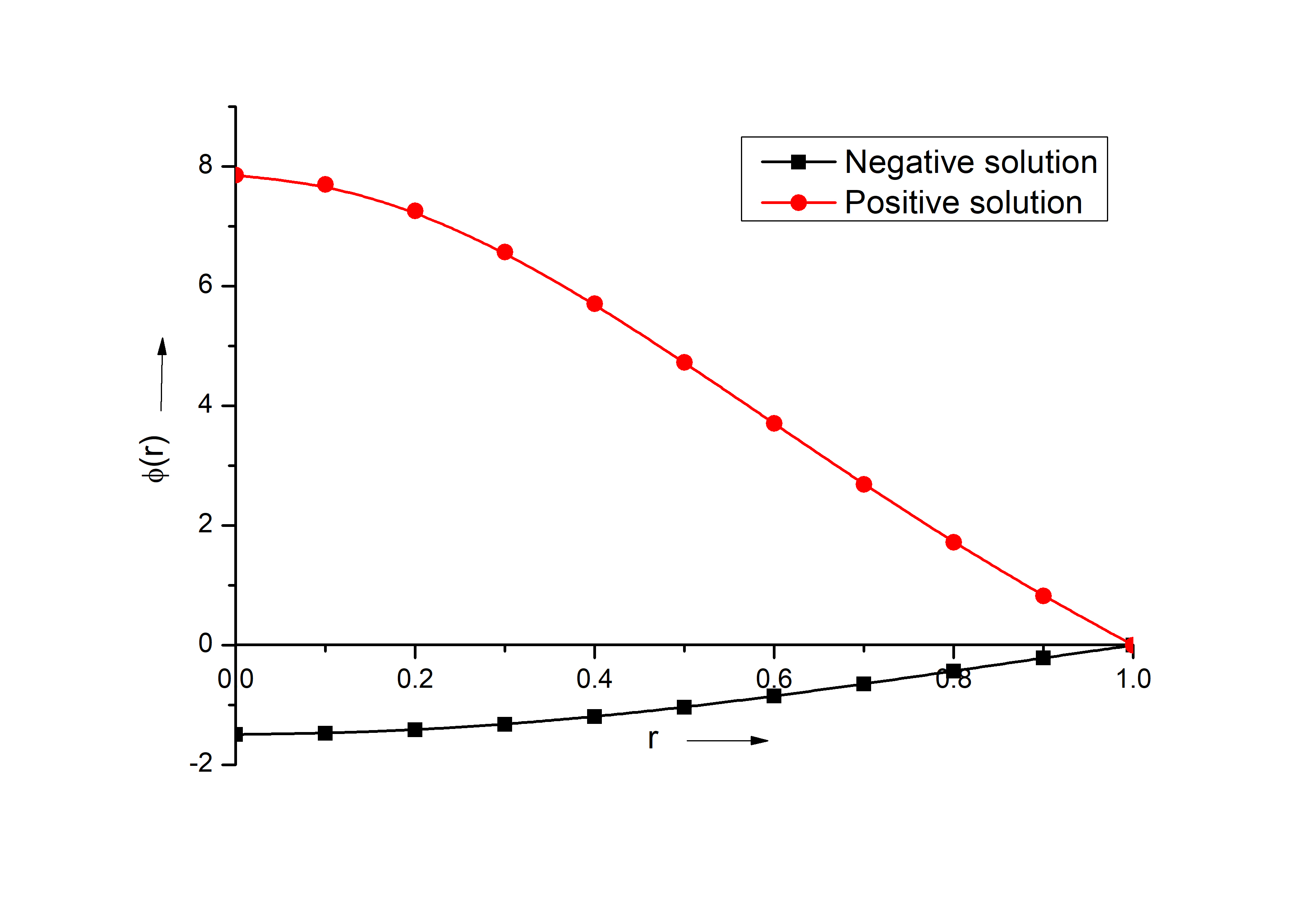}}
\end{figure}
\begin{figure}[H]
\centering
\subfigure[\,\,$\lambda=-60$]{\includegraphics[width=.45\linewidth]{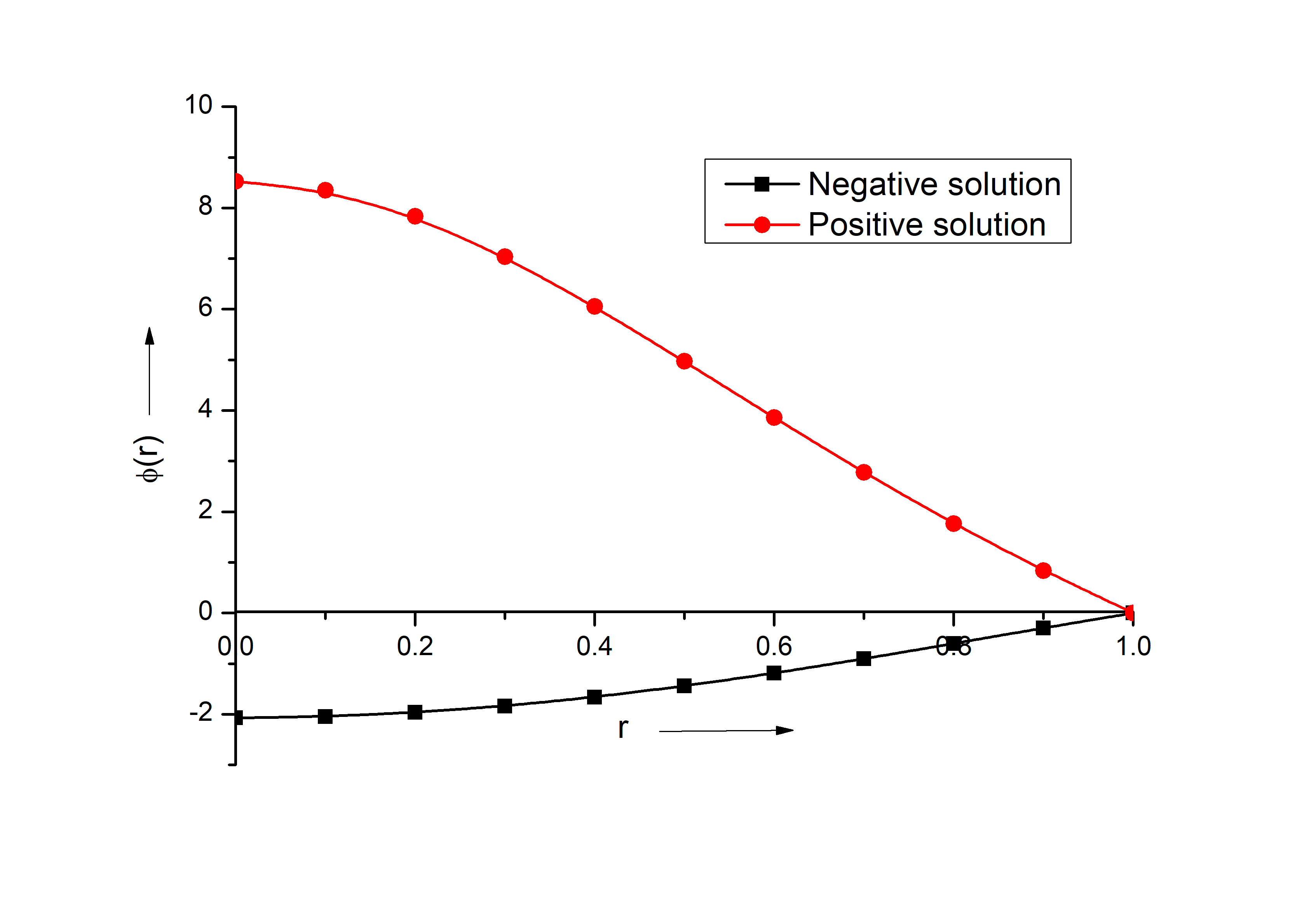}} 
\subfigure[\,\,$\lambda=-100$]{\includegraphics[width=.45\linewidth]{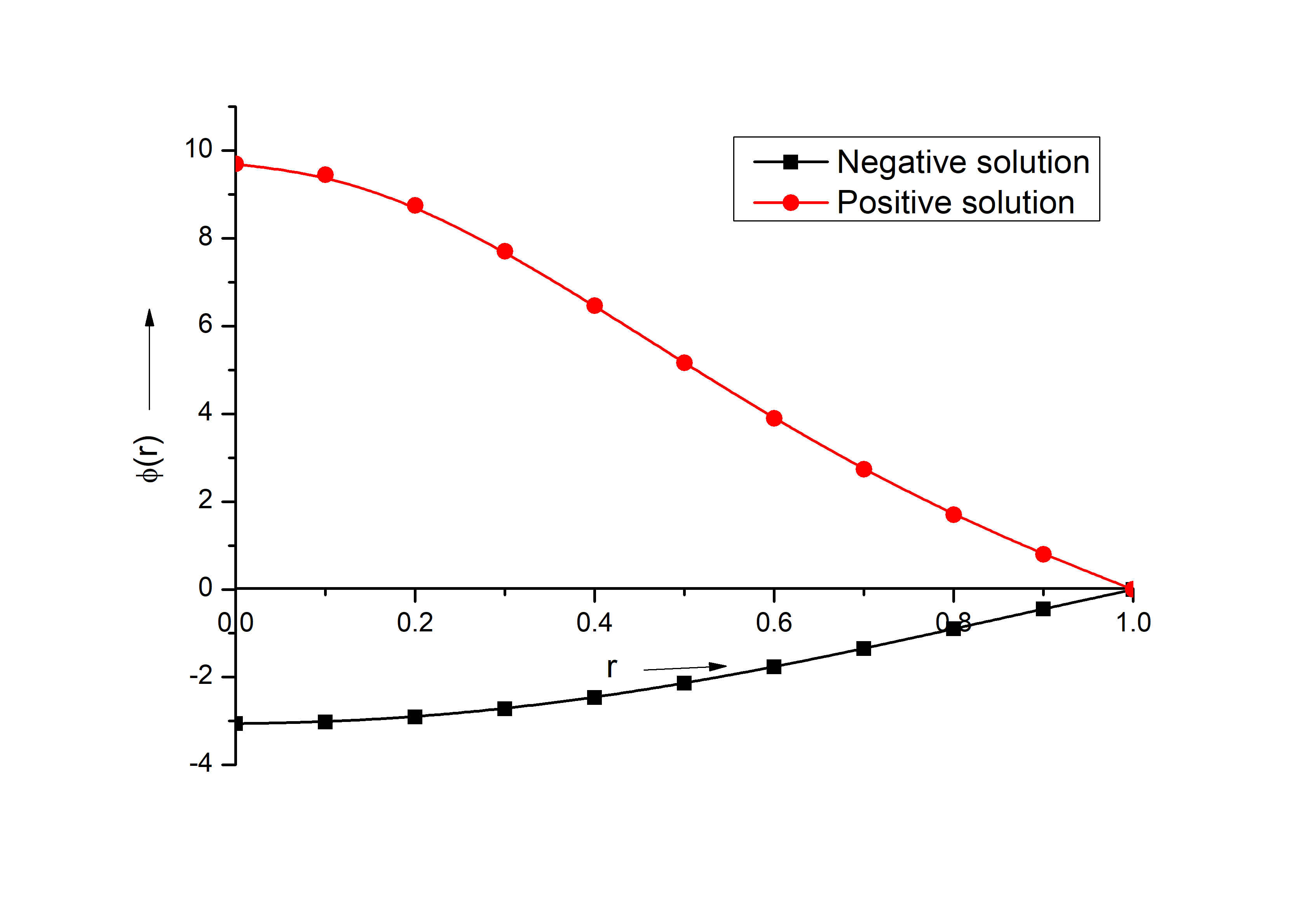}} 
\caption{Graph of $\phi(r)$ versus $r$ for negative $\lambda$.}
\label{P1Figure2}
\end{figure}
\subsection{Navier boundary condition of type two}
\begin{figure}[H]
\centering
\subfigure[\,\,$\lambda=0$]{\includegraphics[width=.45\linewidth]{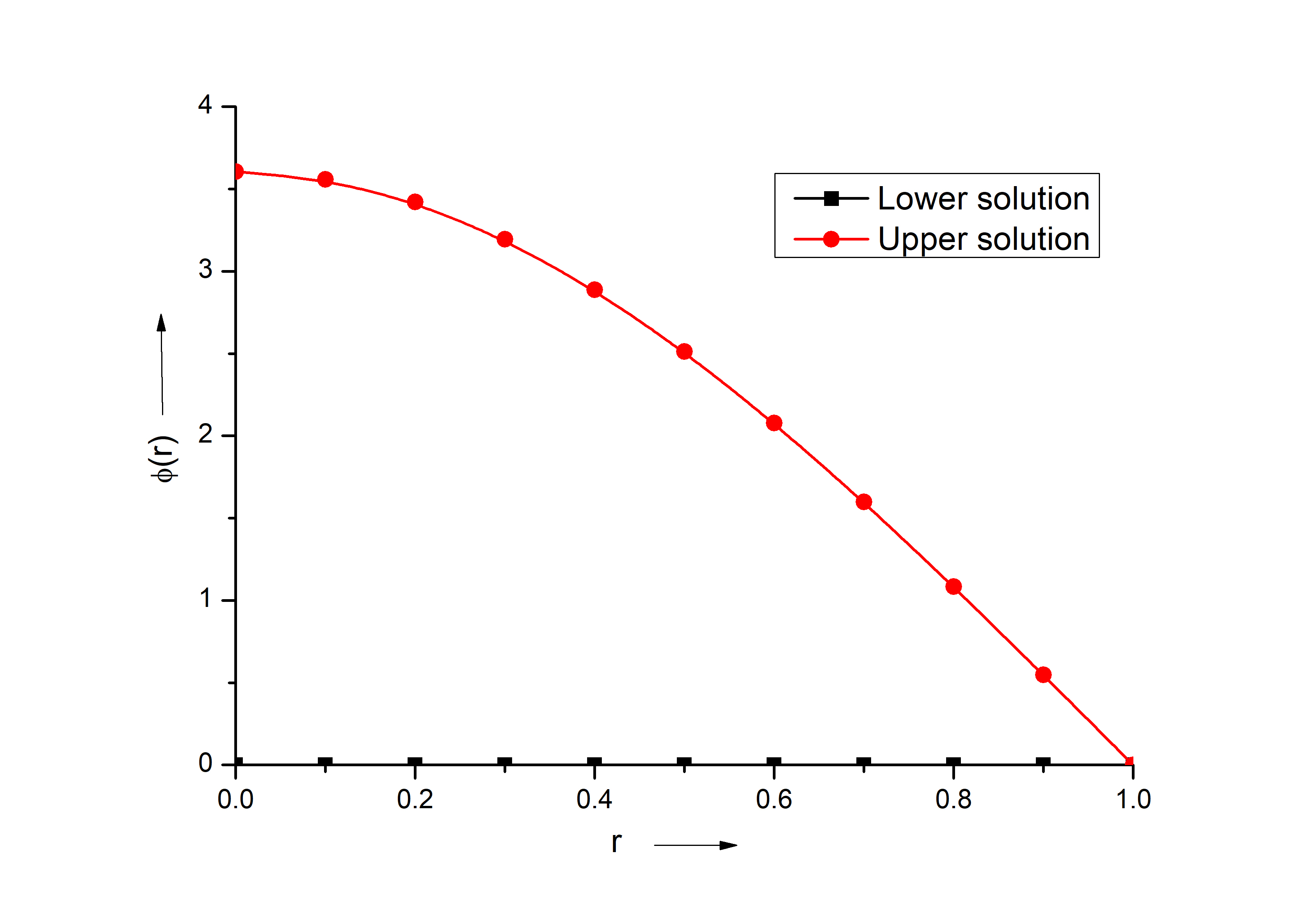}}  
\subfigure[\,\,$\lambda=8$]{\includegraphics[width=.45\linewidth]{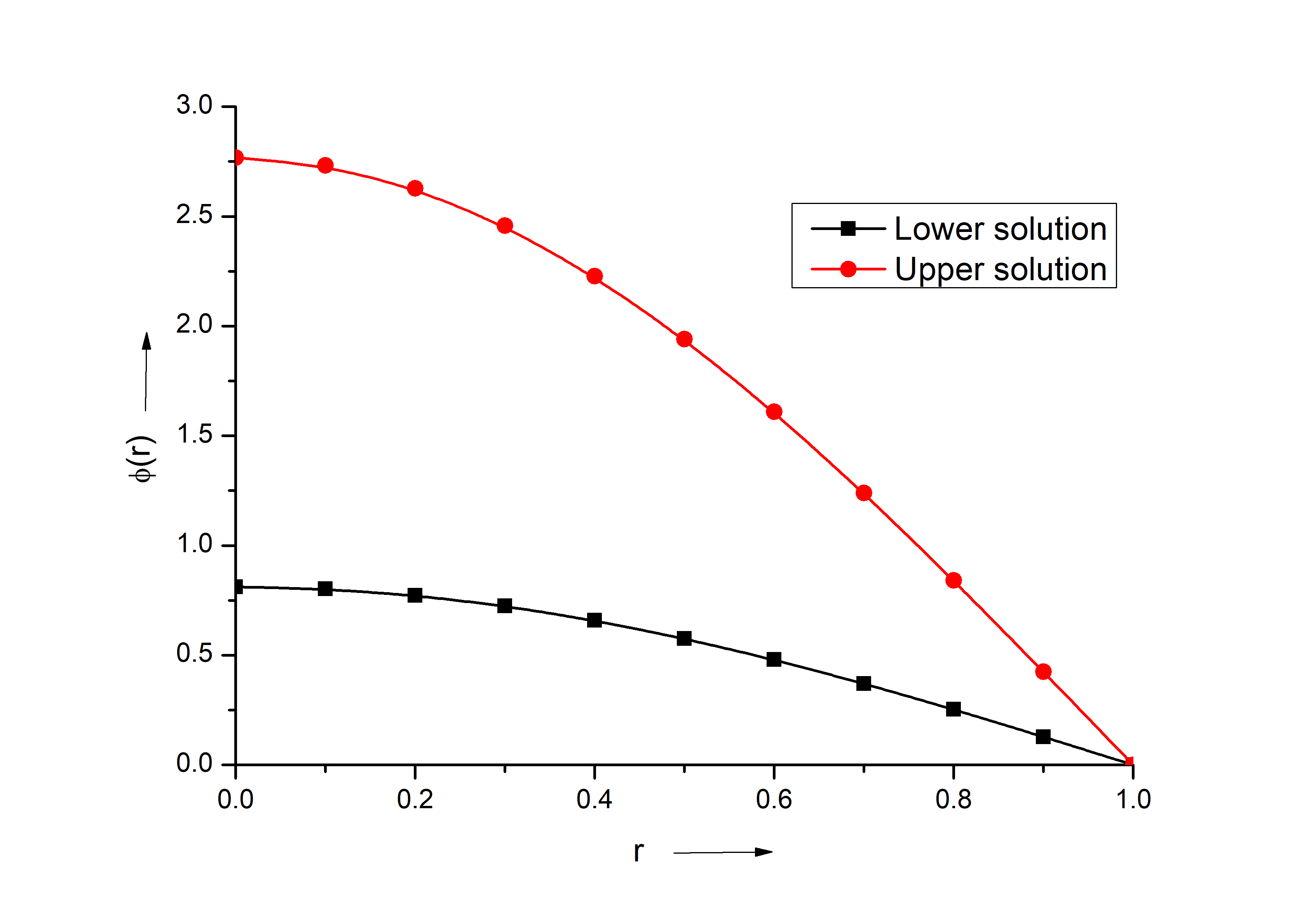}}  
\end{figure}
\begin{figure}[H]
\centering
\subfigure[\,\,$\lambda=10$]{\includegraphics[width=.45\linewidth]{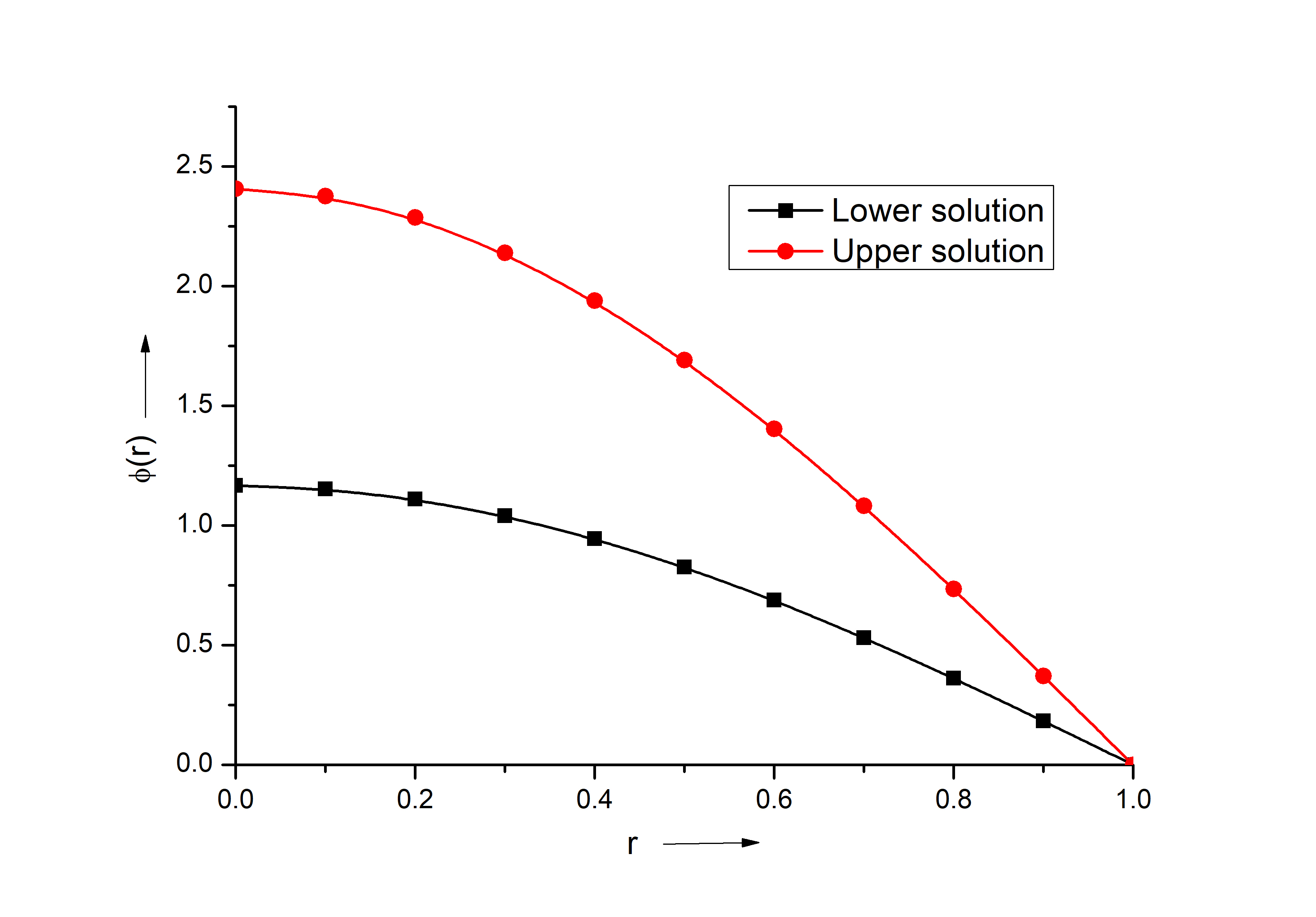}} 
\subfigure[\,\,$\lambda=11.34$]{\includegraphics[width=.45\linewidth]{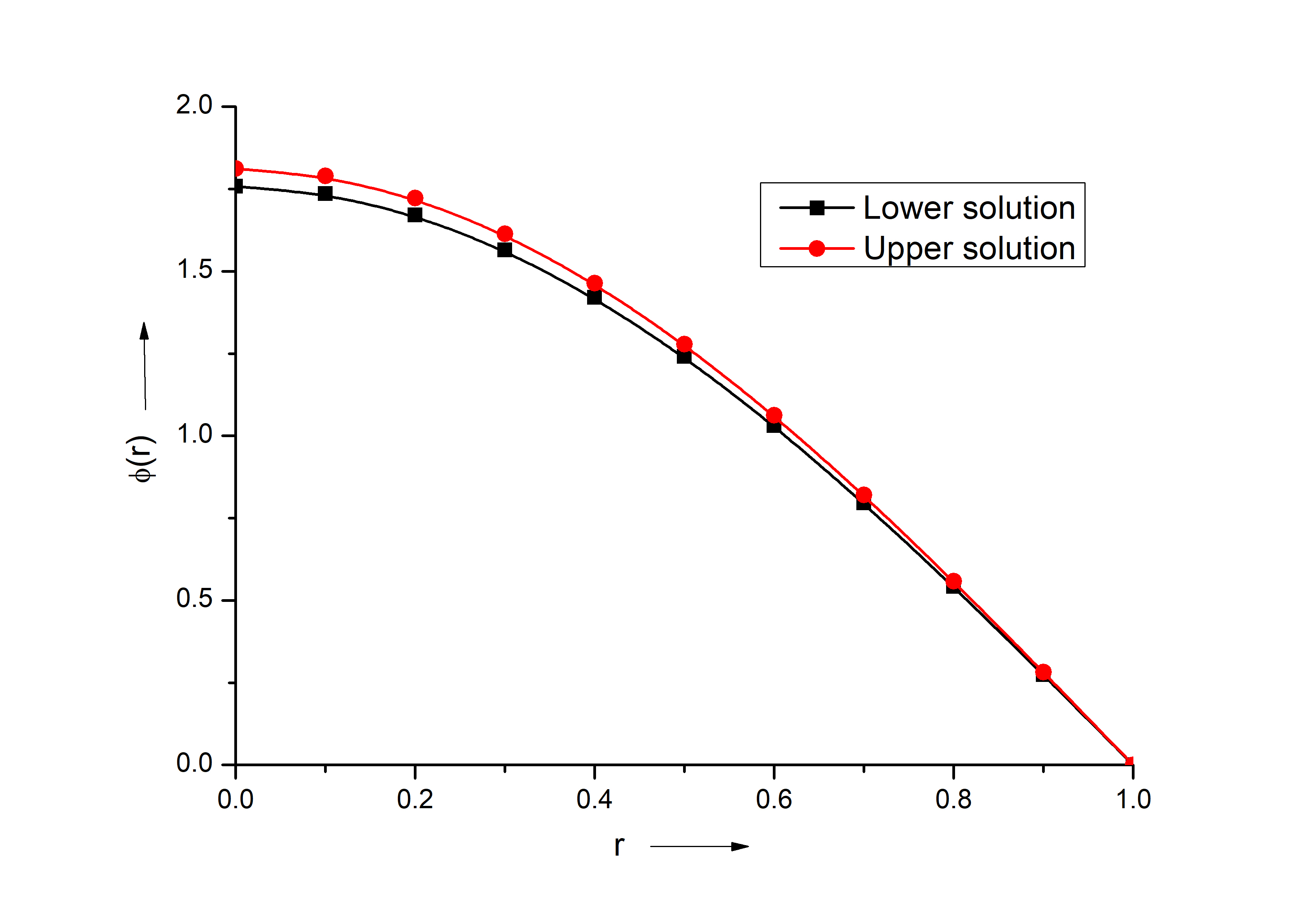}} 
\caption{Graph of $\phi(r)$ versus $r$ for positive $\lambda$.}
\label{P1Figure3}
\end{figure}

\begin{figure}[H]
\centering
\subfigure[\,\,$\lambda=-1$]{\includegraphics[width=.45\linewidth]{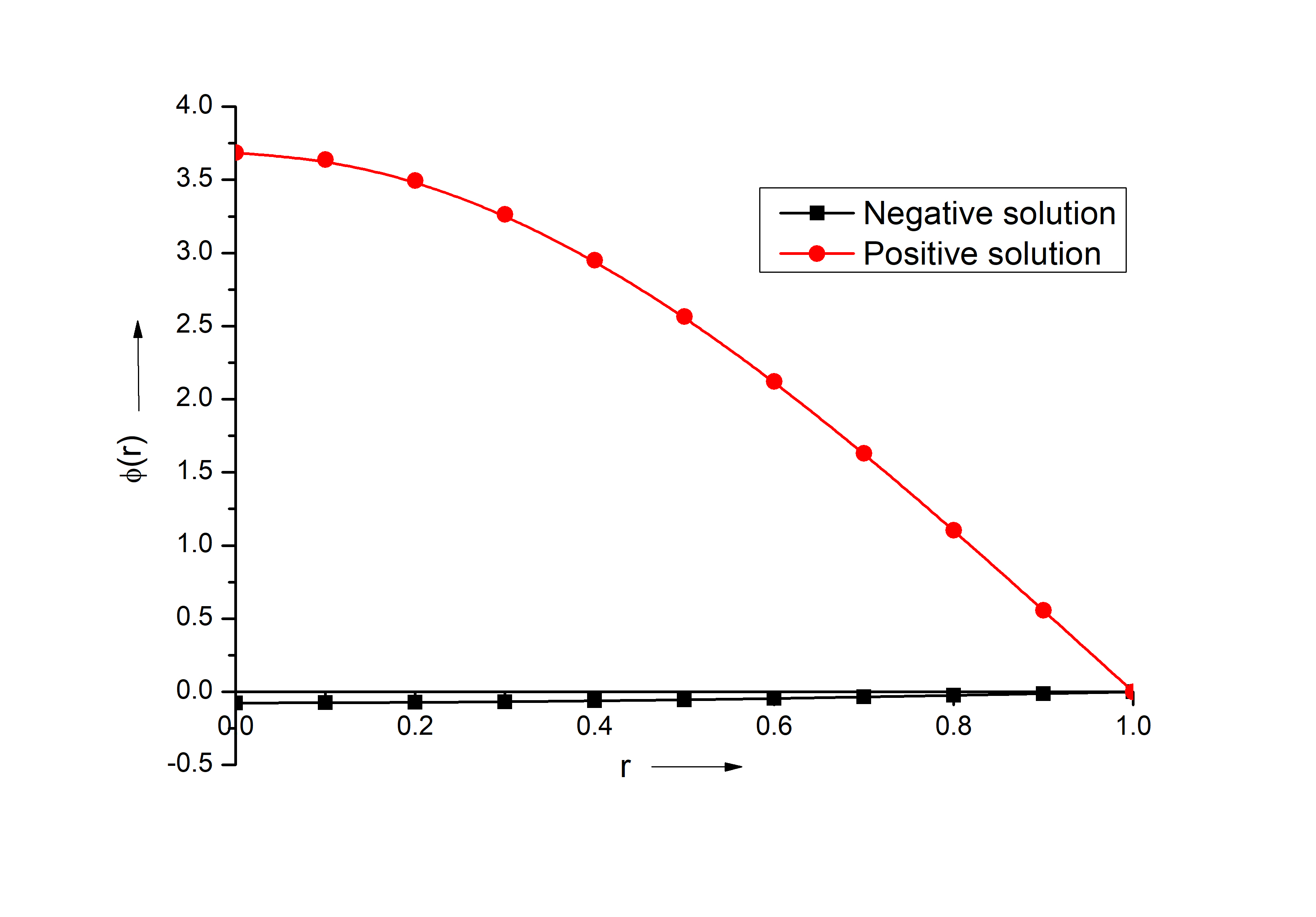}}  
\subfigure[\,\,$\lambda=-50$]{\includegraphics[width=.45\linewidth]{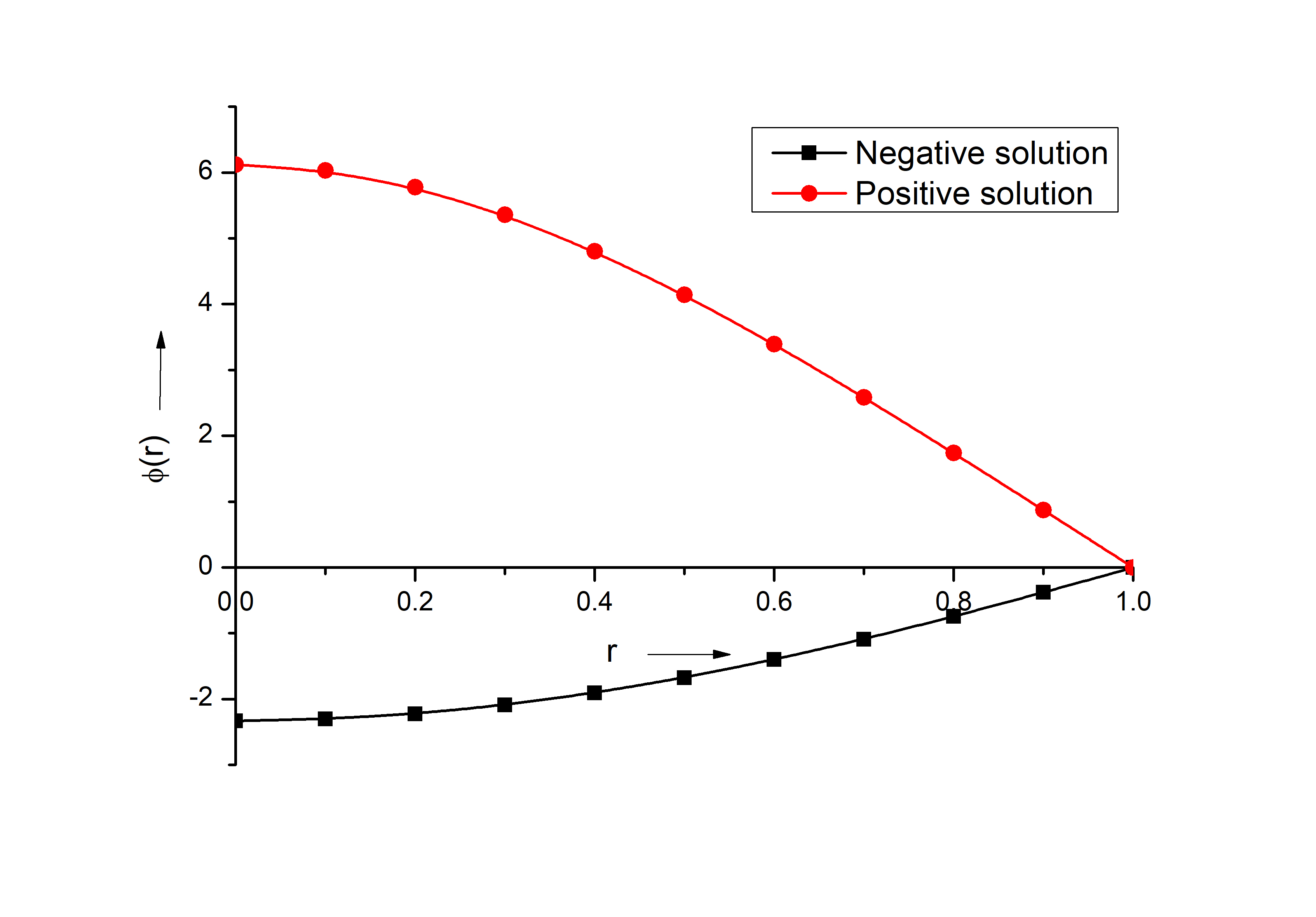}}
\end{figure}
\begin{figure}[H]
\centering
\subfigure[\,\,$\lambda=-100$]{\includegraphics[width=.45\linewidth]{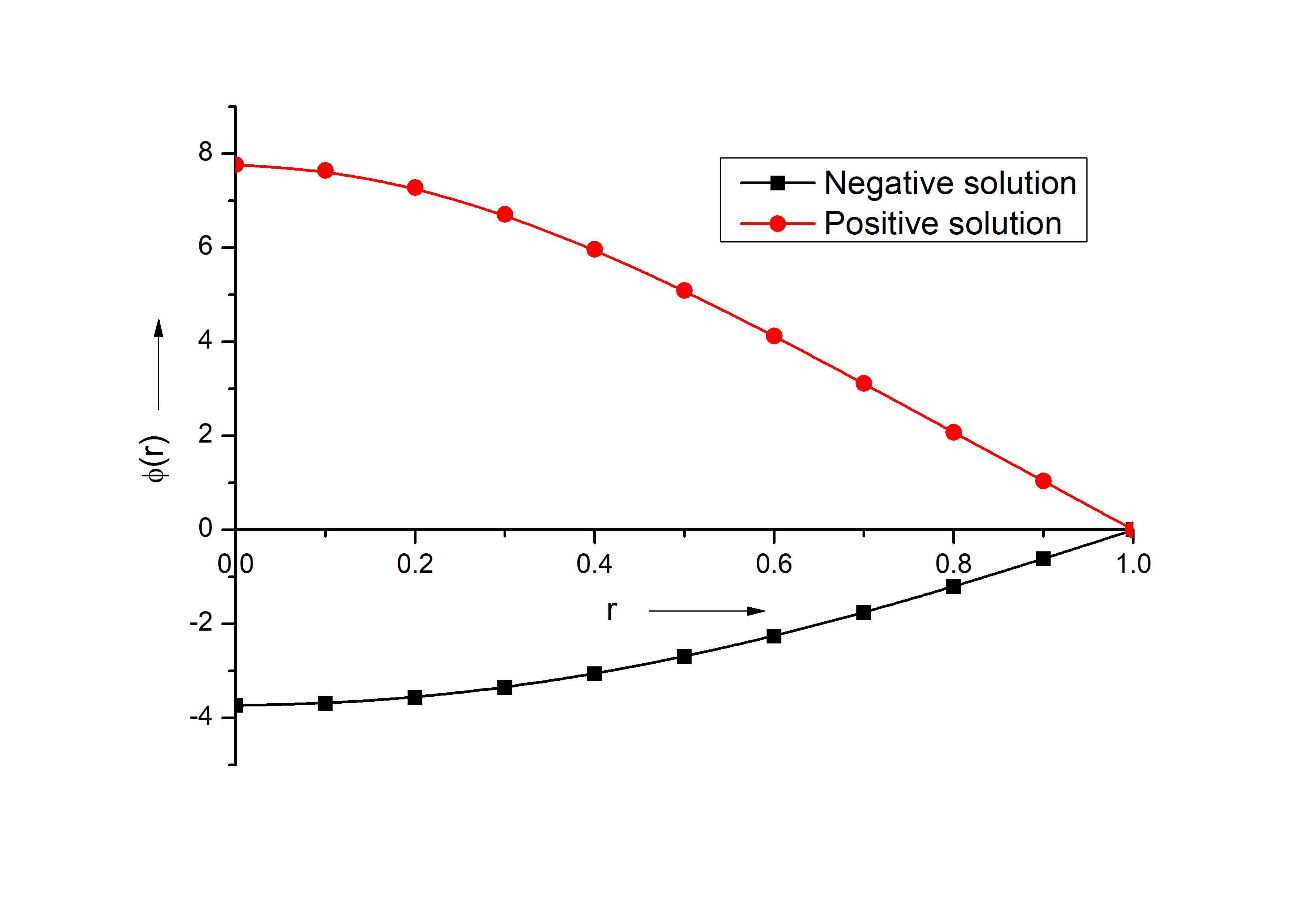}}  
\subfigure[\,\,$\lambda=-160$]{\includegraphics[width=.45\linewidth]{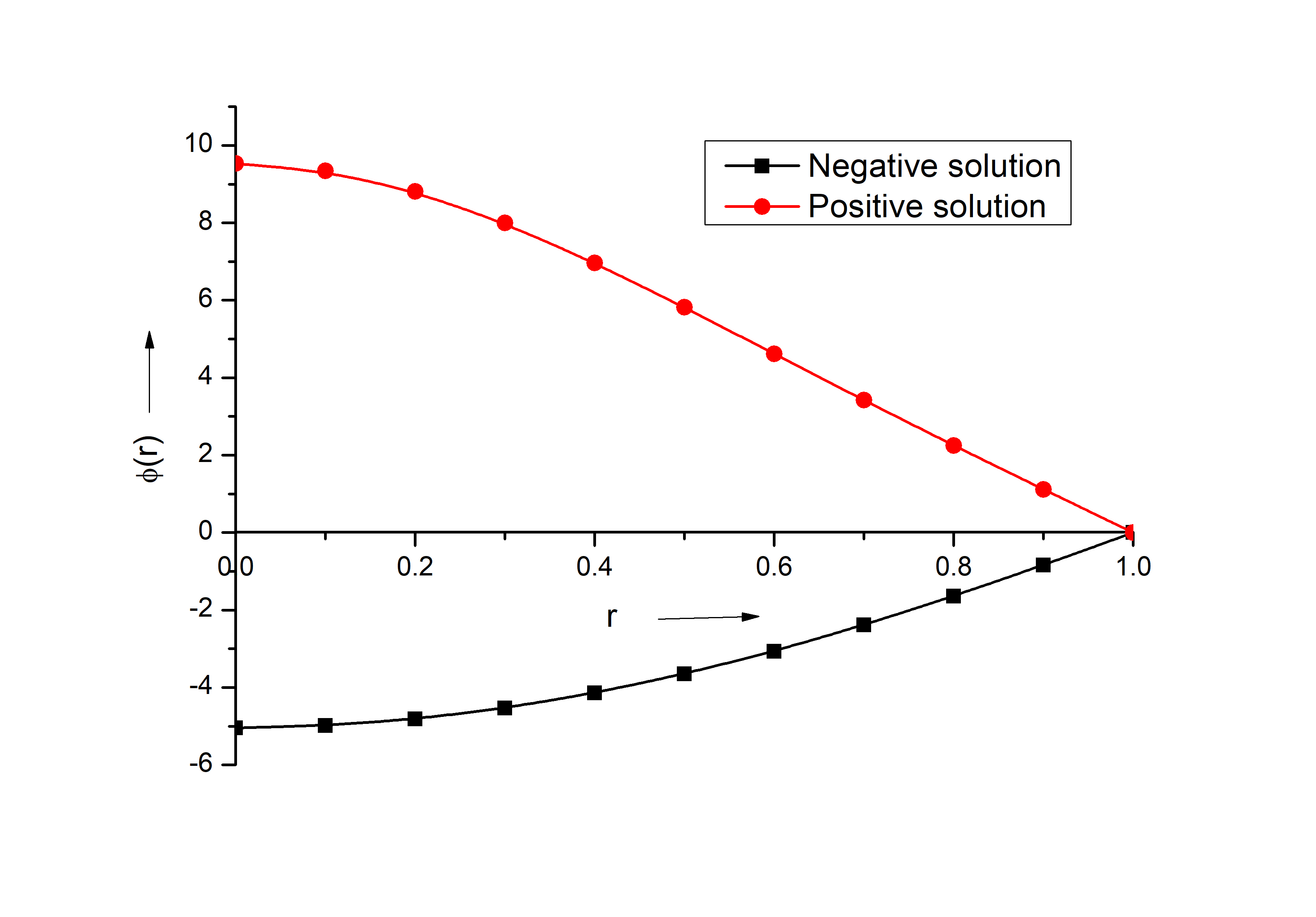}} 
\caption{Graph of $\phi(r)$ versus $r$ for negative $\lambda$.}
\label{P1Figure4}
\end{figure}

\subsection{Dirichlet boundary condition}
\begin{figure}[H]
\centering
\subfigure[\,\,$\lambda=0$]{\includegraphics[width=.45\linewidth]{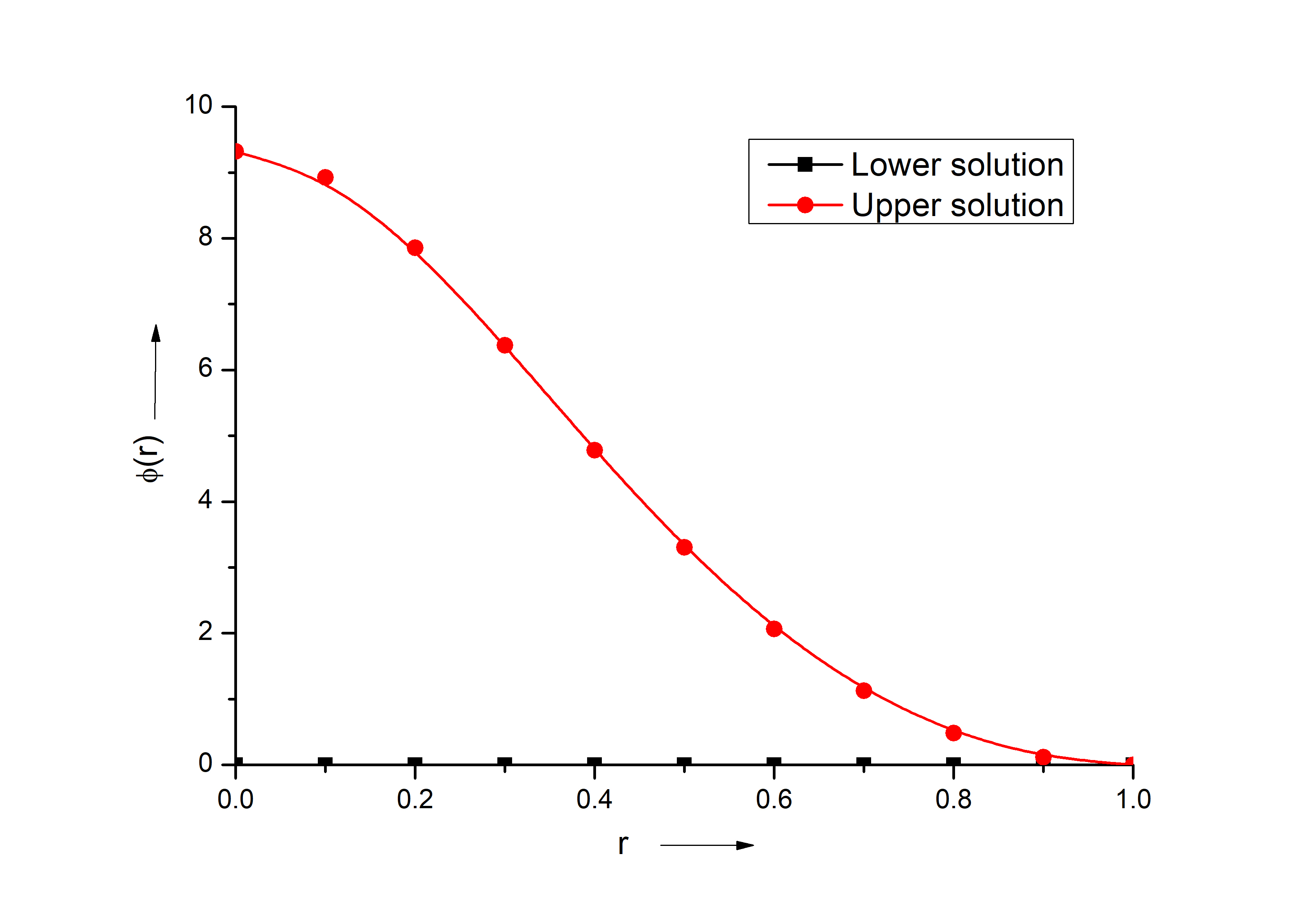}}  
\subfigure[\,\,$\lambda=100$]{\includegraphics[width=.45\linewidth]{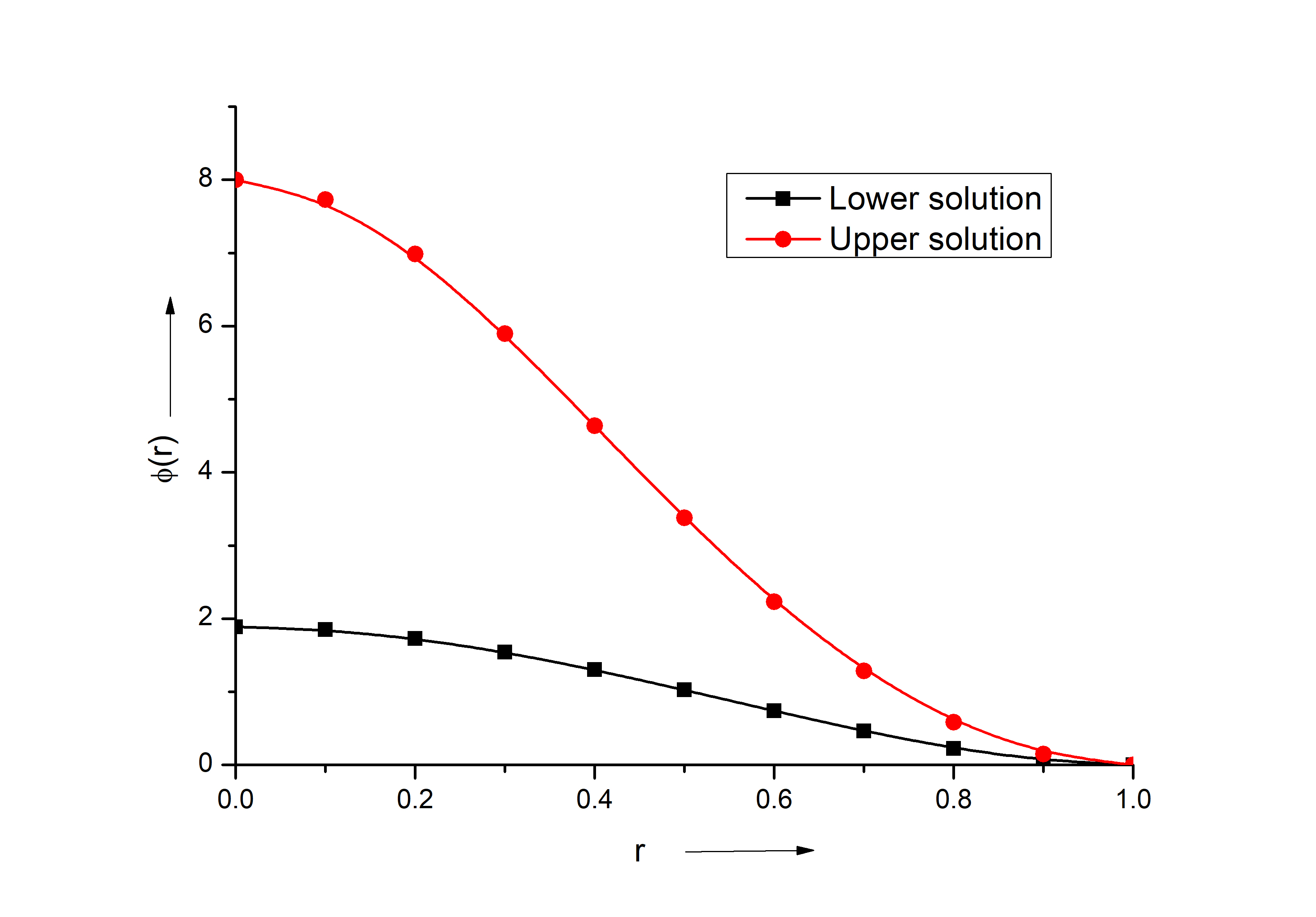}}  
\end{figure}
\begin{figure}[H]
\centering
\subfigure[\,\,$\lambda=150$]{\includegraphics[width=.45\linewidth]{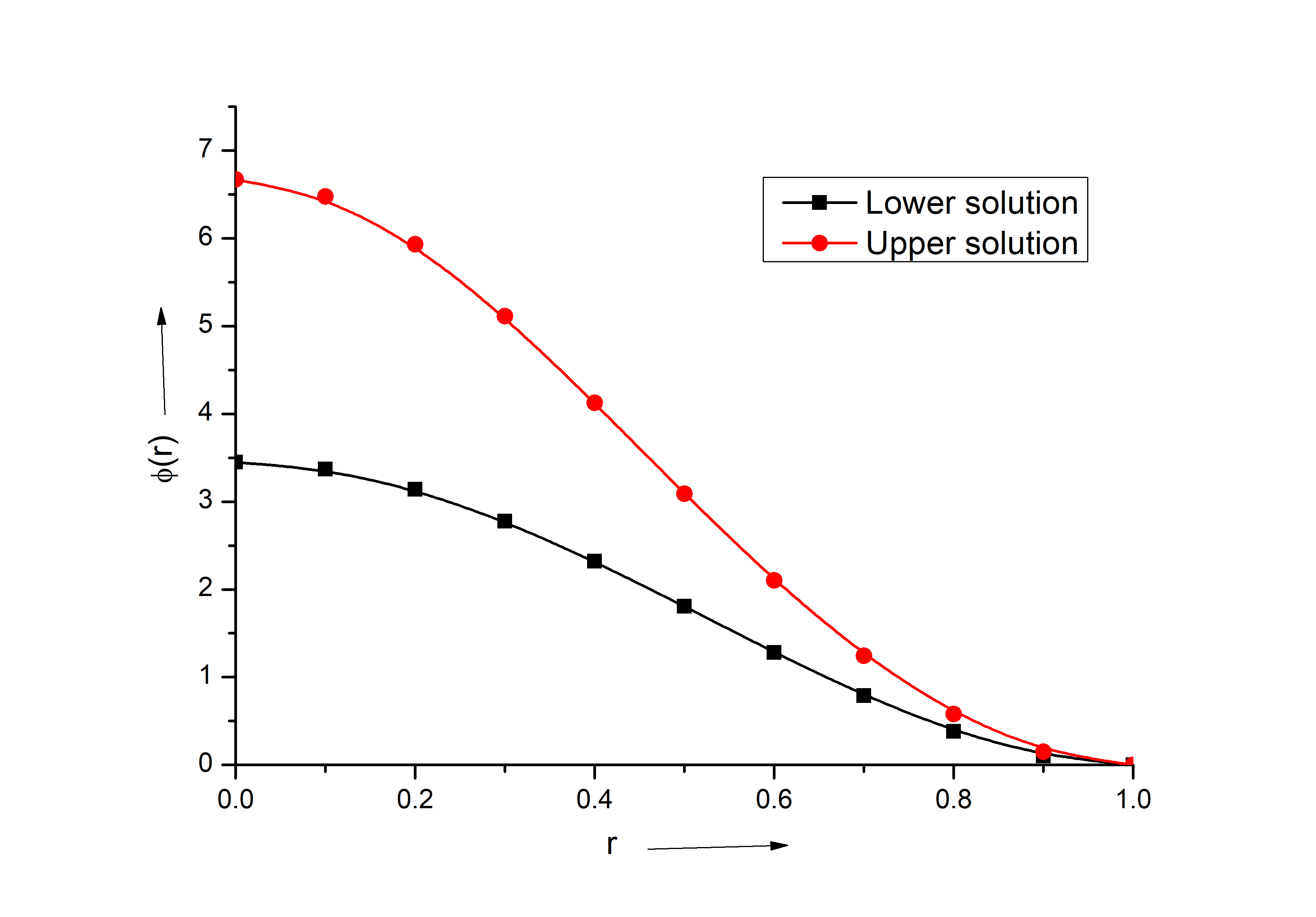}}  
\subfigure[\,\,$\lambda=168.5$]{\includegraphics[width=.45\linewidth]{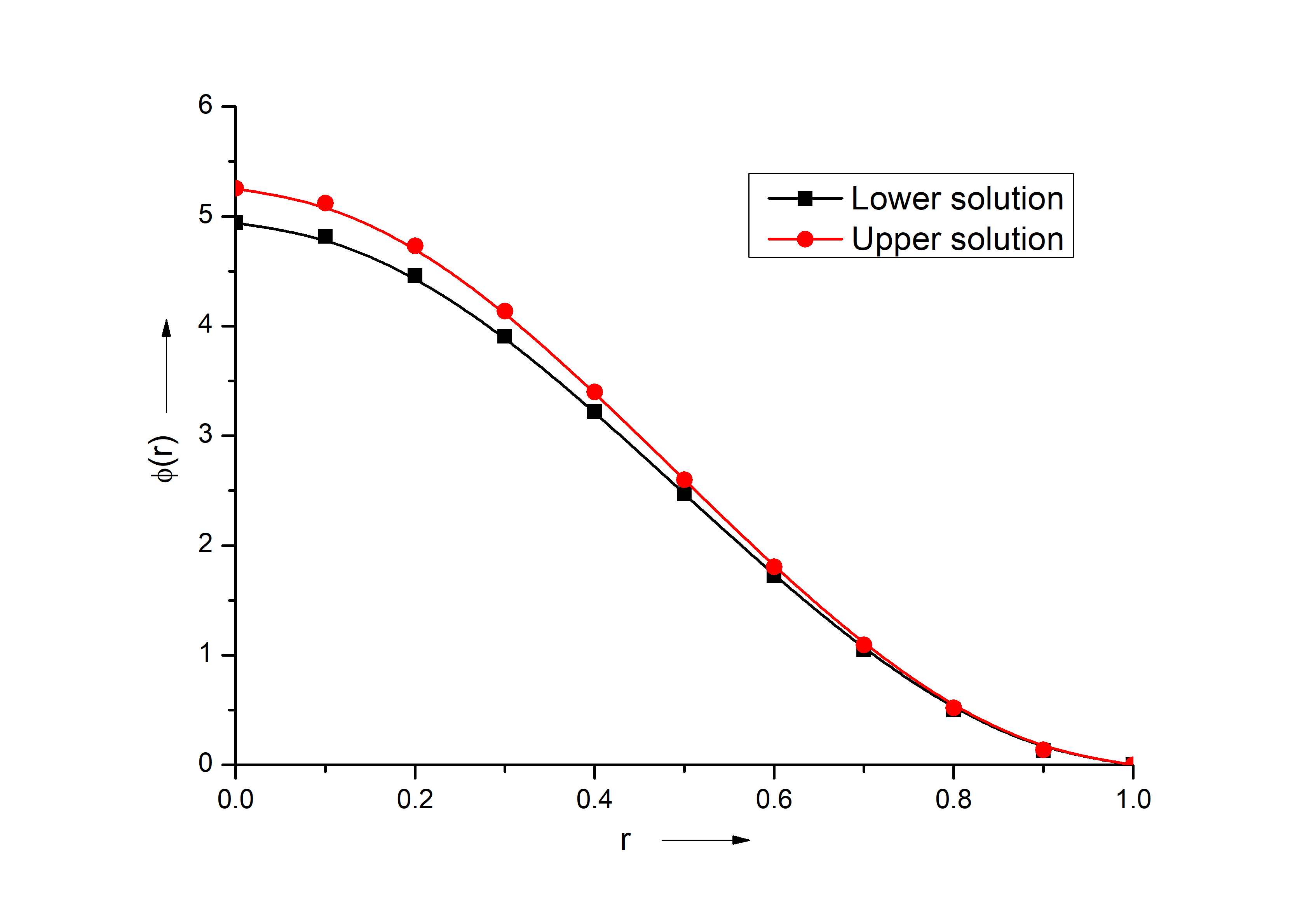}} 
\caption{Graph of $\phi(r)$ versus $r$ for positive $\lambda$.}
\label{P1Figure5}
\end{figure}

\begin{figure}[H]
\centering
\subfigure[\,\,$\lambda=-1$]{\includegraphics[width=.45\linewidth]{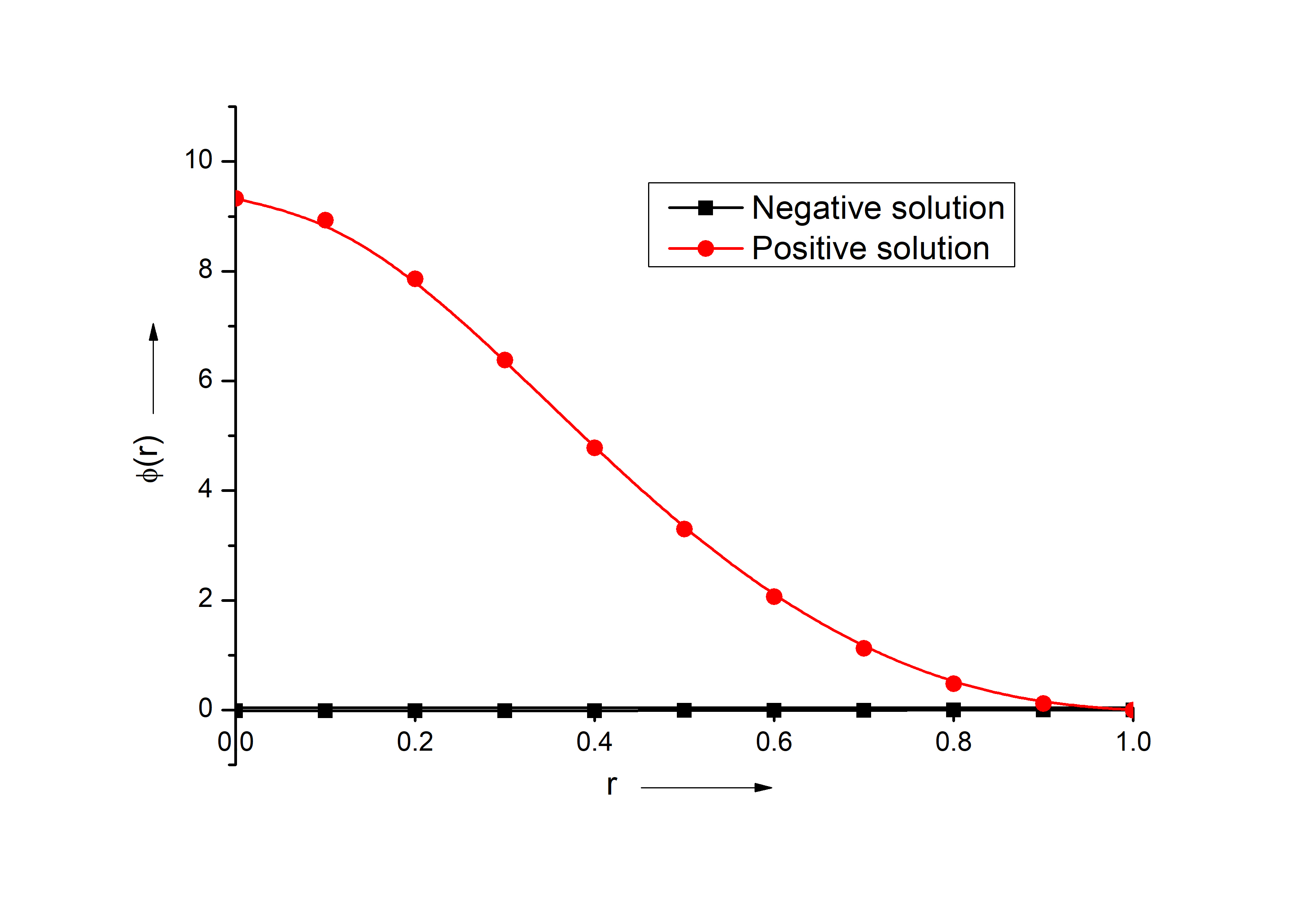}}  
\subfigure[\,\,$\lambda=-10$]{\includegraphics[width=.45\linewidth]{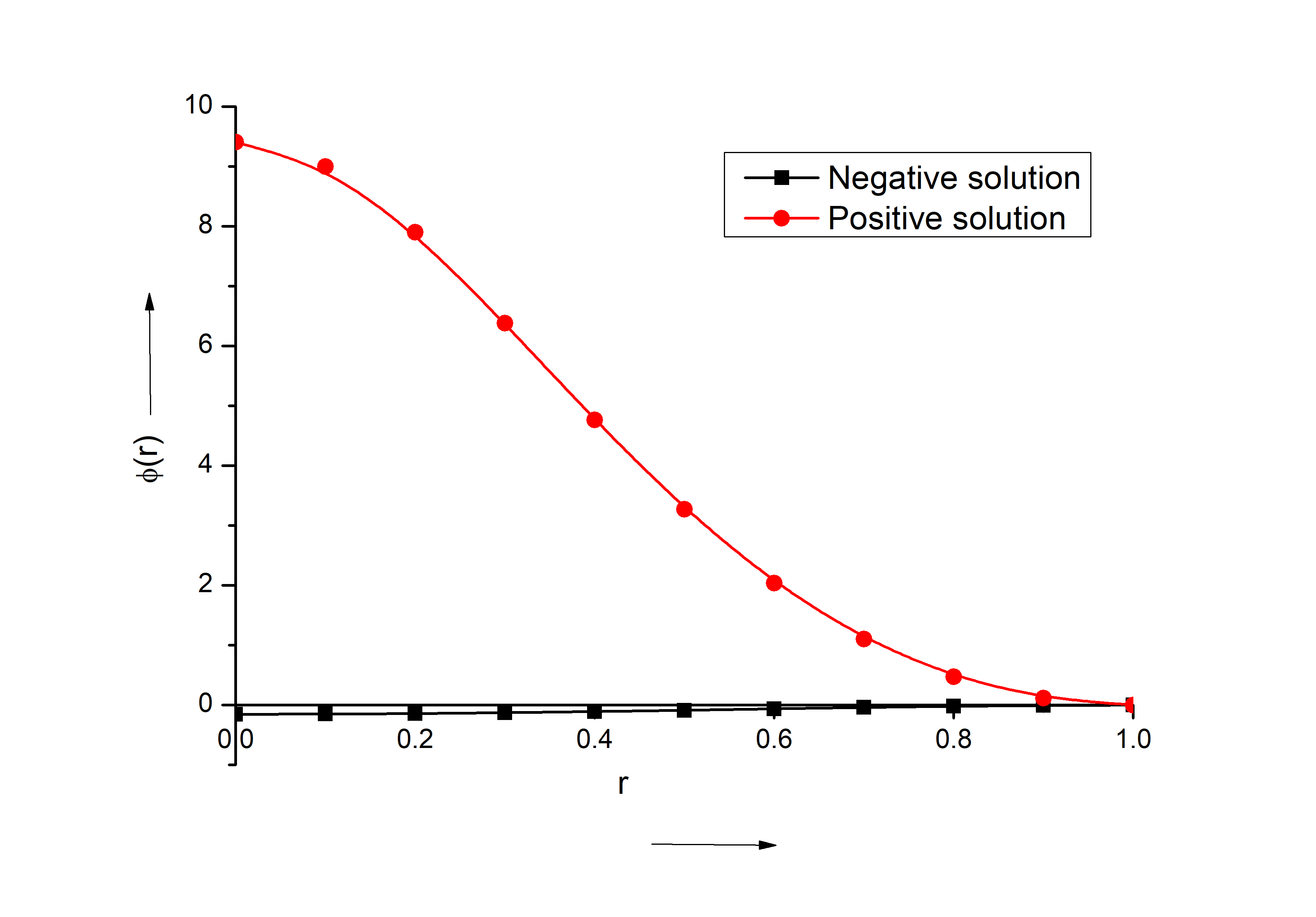}}
\end{figure}
\begin{figure}[H]
\centering
\subfigure[\,\,$\lambda=-15$]{\includegraphics[width=.45\linewidth]{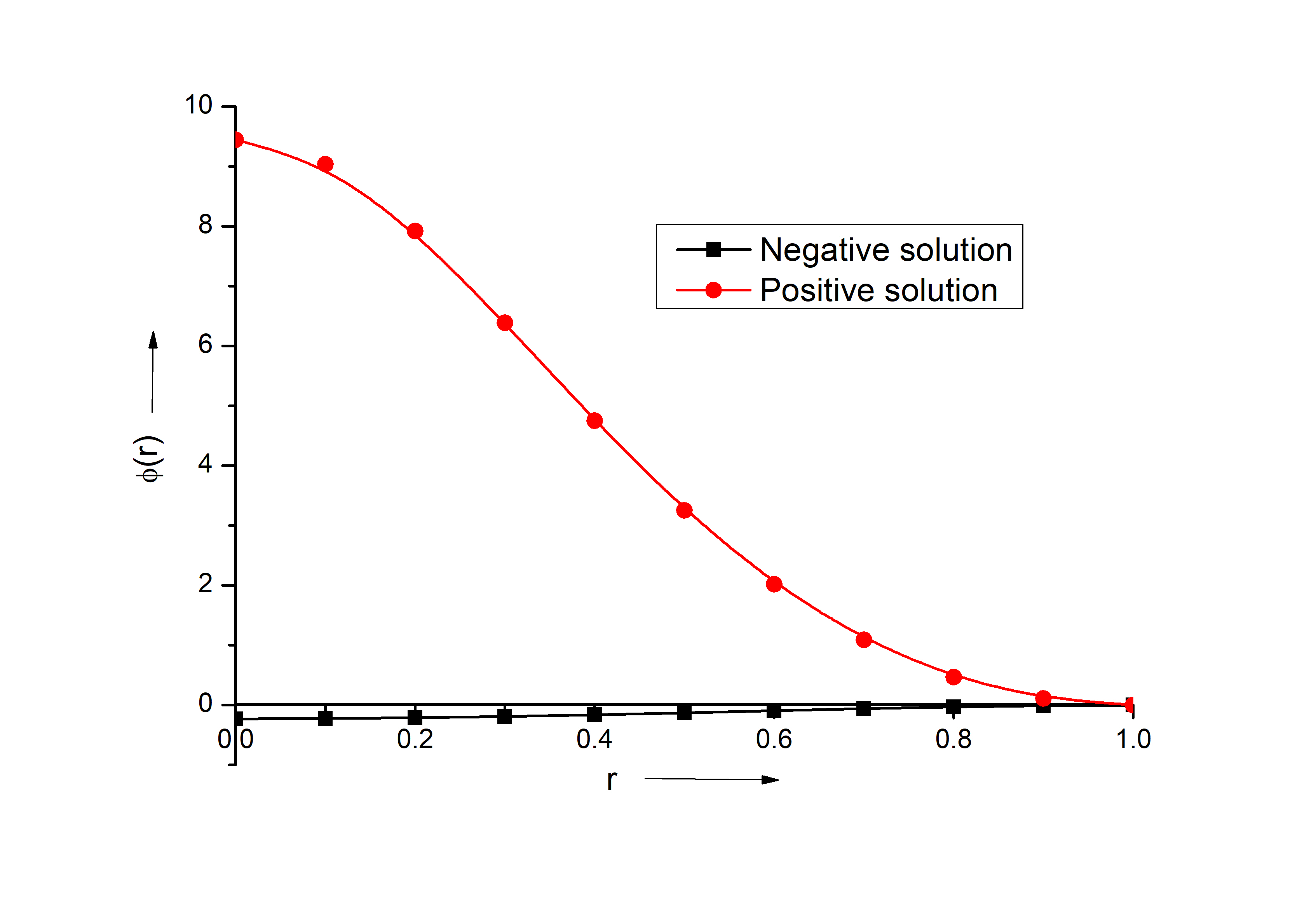}} 
\subfigure[\,\,$\lambda=-25$]{\includegraphics[width=.45\linewidth]{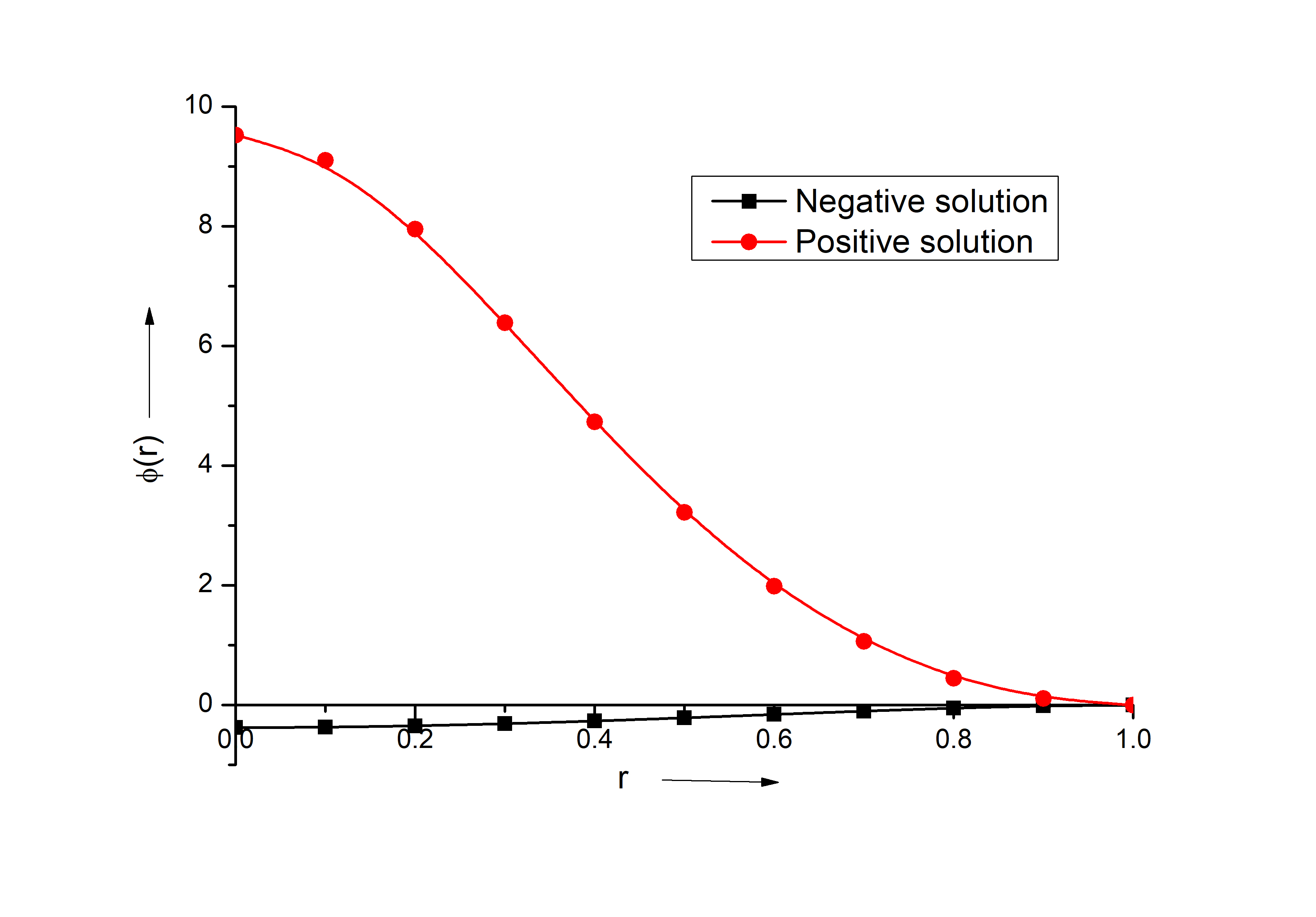}} 
\caption{Graph of $\phi(r)$ versus $r$ for negative $\lambda$.}
\label{P1Figure6}
\end{figure}

\section{Conclusions}\label{conclusions}

In this paper, we have applied an iterative numerical method to nonlinear singular boundary value problems that arise in the theory of epitaxial growth. The proposed technique gives us approximate numerical solutions that are very close to exact solutions of the given differential equation.
We have shown that for negative values of the forcing parameter $\lambda$ two solutions always coexist: they are ordered and one is negative and the other
positive. For more negative values of this parameter the solutions separate. When the value of this parameter is set to zero then one of the solutions becomes
trivial but there still exists a second, nontrivial and positive, solution. For positive values of $\lambda$ below some critical threshold, which we have numerically estimated, there exist two ordered solutions, both nontrivial and positive, which become closer as the value of $\lambda$ increases. We conjecture that both solutions merge into a single solution (still nontrivial and positive) when $\lambda$ is set to its critical value. No solutions were numerically
detected for supercritical values of $\lambda$. The results for the three different sets of boundary conditions we have considered herein show a qualitative
agreement, although they do not behave in the same quantitative way. Also note that the present results agree with and extend those in \cite{CarlosRadial2013},
which were obtained by means of a fourth order Runge-Kutta method.

We expect the present results to have an impact in the understanding of these theoretical models of epitaxial growth. As we have seen the properties of the
solution set depend on the value of the parameter $\lambda$. This parameter has a clear physical meaning: it is the rate at which new material is deposited
onto the epitaxially growing solid. The fact that we take $G(r) \equiv 1$ in equation~\eqref{Eq 3} means that this rate is homogeneous. Our numerical analysis shows that two stationary solutions exists if $\lambda$ is small enough. The lower solution, if the full time dependent model were considered, should be dynamically stable, while the upper solution should be unstable. This means that, if the deposition rate is small enough, one should observe, instead of the system growing, the formation of a stationary mound, at least for suitable initial conditions. The formation of this stationary mound, which may look like a counterintuitive phenomenon in the presence of a constant flux of mass, is possible due to the boundary conditions. We expect both the Navier conditions to be
related to the presence of an open border in the system, that is, the deposited material that gets to the border leaves the system and never gets back into it.
On the other hand, the Dirichlet conditions could be related to a system that is undergoing material drainage on the border, what could explain the quantitative
differences among both sets of boundary conditions, such as the higher critical $\lambda$ for the Dirichlet conditions. In any case, for a large enough
$\lambda$ no stationary solutions still exist, and this is due to the physical fact that the rate at which new material is entered into the system is simply too high and the system will be growing forever. In spite of the simplicity of our mathematical models, we still expect these predictions to be testable against
suitable experiments. Indeed, the existence theory can be related to physical phenomena and therefore the validity of the models could be established, at least at a qualitative level.

\section*{Acknowledgements}
This work has been supported by a grant provided by DST(SERB), New Delhi, India, File no. SB/S4/MS/805/12
and by the Government of Spain (Ministry of Economy, Industry and Competitiveness) through Project MTM2015-72907-EXP.

\bibliography{MasterR}

\begin{thebibliography}{10}

\bibitem{Anderson1980}
N.~Anderson and A.~M. Arthurs.
\newblock Complementary variational principles for diffusion problems with
  michaelis- menten kinetics.
\newblock {\em Bulletin of Mathematical Biology}, 42:131--135, 1980.

\bibitem{BE2016}
P.~Balodis and C.~Escudero.
\newblock Polyharmonic $k-$hessian equations in $\mathbb{R}^n$.
\newblock arXiv:1603.09392.

\bibitem{Barabasi:1995}
A.-L. Barabasi and H.~E. Stanley.
\newblock Fractal concepts in surface growth.
\newblock {\em Cambridge University Press: Cambridge}, 1995.

\bibitem{Carlos2008}
C.~Escudero.
\newblock Geometric principles of surface growth.
\newblock {\em Physical Review Letters}, 101:1--4, 2008.

\bibitem{EGHPT2015}
C.~Escudero, F.~Gazzola, R.~Hakl, I.~Peral, and P.~J. Torres.
\newblock Existence results for a fourth order partial differential equation
  arising in condensed matter physics.
\newblock {\em Mathematica Bohemica}, 140:385--393, 2015.

\bibitem{EGP2015}
C.~Escudero, F.~Gazzola, and I.~Peral.
\newblock Global existence versus blow-up results for a fourth order parabolic
  pde involving the hessian.
\newblock {\em J. Math. Pures Appl.}, 103:924--957, 2015.

\bibitem{CarlosRadial2013}
C.~Escudero, R.~Hakl, I.~Peral, and P.~J. Torres.
\newblock On radial stationary solutions to a model of nonequilibrium growth.
\newblock {\em Euro. J. Applied Mathematics}, 24:437--453, 2013.

\bibitem{Carlos2012}
C.~Escudero, R.~Hakl, I.~Peral, and P.~J. Torres.
\newblock Existence and nonexistence result for a singular boundary value
  problem arising in the theory of epitaxial growth.
\newblock {\em Mathematical Methods in the Applied Sciences}, 37:793--807,
  2014.

\bibitem{CarlosOrigin2012}
C.~Escudero and E.~Korutcheva.
\newblock Origins of scaling relations in nonequilibrium growth.
\newblock {\em Journal of Physics A: Mathematical and Theoretical}, 45:1--14,
  2012.

\bibitem{EP2013}
C.~Escudero and I.~Peral.
\newblock Some fourth order nonlinear elliptic problems related to epitaxial
  growth.
\newblock {\em J. Differential Equations}, 254:2515--2531, 2013.

\bibitem{ET2017}
C.~Escudero and P.~J. Torres.
\newblock Radial biharmonic $k-$hessian equations: The critical dimension.
\newblock arXiv:1706.05684.

\bibitem{ET2015}
C.~Escudero and P.~J. Torres.
\newblock Existence of radial solutions to biharmonic k-hessian equations.
\newblock {\em J. Differential Equations}, 259:2732--2761, 2015.

\bibitem{Variationalprinciple}
Bruce~A. Finlayson.
\newblock {\em The Method of Weighted Residuals and Variational Principles}.
\newblock Academic press, New York and London, 1972.

\bibitem{HE1999}
J.~H. He.
\newblock Variational iteration method a kind of nonlinear analytical
  technique:some examples.
\newblock {\em Int. J. Non-Linear Mech.}, 34:699--708, 1999.

\bibitem{HE2007}
J.~H. He.
\newblock Variational iteration method : Some recent results and new
  interpretations.
\newblock {\em J. Comput. Appl. Math.}, 207:3--17, 2007.

\bibitem{HEXHWU2007}
J.~H. He and X.~H. Wu.
\newblock Variational iteration method: new development and applications.
\newblock {\em Comput. Math. Appl.}, 54:881--894, 2007.

\bibitem{ASKA2010}
A.~S. V.~Ravi Kanth and K.~Aruna.
\newblock He's variational iteration method for treating nonlinear singular
  boundary value problems.
\newblock {\em Comput. Math. Appl.}, 60:821--829, 2010.

\bibitem{SAAS2012}
S.~A. Khuri and A.~Sayfy.
\newblock A laplace variational iteration strategy for the solution of
  differential equations.
\newblock {\em Appl. Math. Lett.}, 25:2298--2305, 2012.

\bibitem{VVA1981}
V.~F. Kirichenko and V.~A. Krys'ko.
\newblock Substantiation of the variational method in the theory of plates.
\newblock {\em Int. Appl. Mech.}, 17:366--370, 1981.

\bibitem{PandeySBVP2008}
R.~K. Pandey and A.~K. Verma.
\newblock Existence-uniqueness results for a class of singular boundary value
  problems arising in physiology.
\newblock {\em Nonlinear Anal.}, 9:40--52, 2008.

\bibitem{Ramos2008}
J.~I. Ramos.
\newblock On the variational iteration method and other iterative techniques
  for nonlinear differential equations.
\newblock {\em Applied Mathematics and Computation}, 199:39--69, 2008.

\bibitem{TES1933}
T.~E. Schunk.
\newblock Zurknicldestigkeit schwach gekrlimr, ter zylindrlscher schalen.
\newblock {\em Ing. Arch.}, 4:394--414, 1933.

\bibitem{AKV2015}
M.~Singh and A.~K. Verma.
\newblock An effective computational technique for a class of lane emden
  equations.
\newblock {\em J. Math. Chem.}, 54:231--251, 2016.

\bibitem{MARP2019}
M.~Singh, A.~K. Verma, and R.~P. Agarwal.
\newblock On an iterative method for a clas of 2 point and 3 point nonlinear
  sbvps.
\newblock {\em Journal of Applied Analysis and Computation}, 9:1242--1260,
  2019.

\bibitem{LAAS2010}
L.~A. Soltani and A.~Shirzadi.
\newblock A new modification of the variational iteration method.
\newblock {\em Comput. Math. Appl.}, 59:2528--2535, 2010.

\bibitem{wazwaz2011}
A.~M. Wazwaz.
\newblock The variational iteration method for solving nonlinear singular
  boundary value problems arising in various physical models.
\newblock {\em Commun. Nonlinear Sci. Numer. Simul.}, 16:3881--3886, 2011.

\bibitem{MKB2018}
M.~Zellal and K.~Belghaba.
\newblock An accurate algorithm for solving biological population model by the
  variational iteration method using he' s polynomials.
\newblock {\em Arab J. Basic Appl. Sci.}, 25:142--149, 2018.

\bibitem{XZFYAM2018}
X.~Zhang, F.~A. Shah, Y.~Li, L.~Yan, A.~Q. Baig, and M.~R. Farahani.
\newblock A family of fifth order convergent methods for solving nonlinear
  equations usingvariational iteration technique.
\newblock {\em J. Inform. Optim. Sci.}, 39:673--694, 2018.

\end{thebibliography}
\end{document}